
\documentclass[10pt,reqno,a4paper]{amsart} %(principal)

\usepackage{graphicx}
\usepackage{amssymb,amsmath}
\usepackage{amsthm}
\usepackage{color,graphicx}
\usepackage{hyperref}
\usepackage{color}
\usepackage{mathabx}
\usepackage{subfigure}
\usepackage{enumerate}

\theoremstyle{definition} %isso serve para tirar o itálico do ambiente de teorema.
\usepackage{xcolor}

\usepackage{subfigure} 

\usepackage{verbatim} %serve para comentar em blocos, usando \begin{comment}

\usepackage{relsize} %tornar simbolos matemáticos maiores (usando \mathlarger{} )

%=============================================
%\usepackage[T1]{fontenc}
%\linespread{1.0} %espaçamento entre linhas
%=======================================

\setlength{\textwidth}{16cm} \setlength{\textheight}{22 cm}
\addtolength{\oddsidemargin}{-1.5cm} \addtolength{\evensidemargin}{-1.5cm}

\newcommand{\be}{\begin{equation}}

\newcommand{\ee}{\end{equation}}

\newcommand{\cn}{{\rm \,cn}}
\newcommand{\sn}{{\rm \,sn}}
\newcommand{\dn}{{\rm \,dn}}

\newcommand{\K}{{\rm \,K}}
\newcommand{\E}{{\rm \,E}}

%%%%%%%%%%%%%%%%%%%%%%%%%%%%%%%%%%%%%%%%%%%%%%%%%%%%%%%%%%%%%%%%%%%%%%%
%%%%%%%%%%%%%%%%%%%%%%%%%%%%%%%%%%%%%%%%%%%%%%%%%%%%%%%%%%%%%%%%%%%%%%%

%%%%%%%%%%%%%%%%%%%%%%%%%%%%%%%%%%%%%%%%%%%%%%%%%%%%%%%%%%%%%%%%%
%%%%%%%%%%%%%%%%%%%%%%%%%%%%%%%%%%%%%%%%%%%%%%%%%%%%%%%%%%%%%%%%%
%\newcommand{\bbox}{\rule[1mm]{1ex}{1ex}}
%\newcommand{\seccion}[1]{\section{#1}\setcounter{equation}{0}}
\numberwithin{equation}{section}
\numberwithin{figure}{section}

\newtheorem{theorem}{Theorem}[section]
\newtheorem{proposition}[theorem]{Proposition}
\newtheorem{remark}[theorem]{Remark}
\newtheorem{lemma}[theorem]{Lemma}

\newtheorem{definition}[theorem]{Definition}

%%%%%%%%%%%%%%%%%%%%%%%%%%%%%%%%%%%%%%%%%%%%%%%%%%%%%%%%%%%%%%%%
%%%%%%%%%%%%%%%%%%%%%%%%%%%%%%%%%%%%%%%%%%%%%%%%%%%%%%%%%%%%%%%%
\begin{document}
\vglue-1cm \hskip1cm
\title[Multiple periodic waves of the Schrödinger system with quintic nonlinearity]{Schrödinger system with quintic nonlinearity: \\ spectral stability of multiple sign-changing periodic waves
\\ \vspace{0.2cm}}

\begin{center}

\subjclass{76B25, 35Q51, 35Q70.}

\keywords{Spectral stability, Periodic waves, Schrödinger system.}

\maketitle

{\bf Gabriel E. Bittencourt Moraes}

{Departamento de Matem\'atica - Universidade Estadual de Londrina \\
Londrina, PR, Brazil.}\\
{gbittencourt@uel.br}

\vspace{0.5cm}

{\bf Guilherme de Loreno}

{Departamento de Matem\'atica - Universidade Estadual do Centro-Oeste \\ 
Guarapuava, PR, Brazil.}\\
{guilherme.loreno@unicentro.br}

\vspace{3mm}

\end{center}

\begin{abstract}
This manuscript investigates the existence and spectral stability of multiple periodic standing wave solutions for a nonlinear Schrödinger system. By considering both \textit{cnoidal} and \textit{snoidal} profiles, we provide a comprehensive spectral analysis of the associated linearized operators, employing the Floquet theory and comparison theorems. Stability results are derived under periodic perturbations with the same period as the underlying standing waves. Furthermore, we apply the spectral stability theory via Krein signature, as developed in \cite{KapitulaKevrekidisSandstedeI} and \cite{KapitulaKevrekidisSandstedeII}, to determine the spectral stability and instability results.
\end{abstract}

\section{Introduction} 

This paper investigates the spectral stability of periodic standing waves for the quintic nonlinear Schrödinger (NLS) system
\begin{equation}\label{NLS-system}
	\left\{ \begin{array}{l}
		i u_t + u_{xx} + \kappa |u|^4 u + \gamma v^3 \overline{u}^2 = 0 \\
		i v_t + v_{xx} + \kappa |v|^4 v + \gamma u^3 \overline{v}^2 = 0,
	\end{array}
\right.
\end{equation}
where $u,v: \mathbb{R} \times (0,+\infty) \rightarrow \mathbb{C}$ are complex-valued functions that are $L$-periodic in the spatial variable for $L>0$. The system parameters $\kappa > 0$ and $\gamma \geq 0$ are real constants. In this work, we are particularly interested in the stability of periodic solutions that change sign over the period $[0, L]$, commonly referred to as periodic sign-changing solutions.

The scalar nonlinear Schrödinger equation (obtained by setting $v = 0$ in \eqref{NLS-system}) arises in various physical and biological contexts. These include nonlinear optics, Bose-Einstein condensates, and the description of nonlinear waves such as laser beam propagation, water waves at the free surface of an ideal fluid, and plasma waves. Furthermore, it appears in DNA modeling and mesoscopic molecular structures. While the cubic NLS system has been extensively studied in these fields (see \cite{Agrawal, IedaMiyakawaWadati, KannaSakkaravarthi, WangCui}), the quintic case has recently gained attention due to its relevance in describing higher-order nonlinear effects in optical fibers and dilute gases.

Regarding the scalar quintic NLS equation in the periodic setting, several studies have established orbital and spectral stability results for positive solutions with dnoidal profiles (see \cite{AnguloNatali2009, HakkaevStanislavovaStefanov}). For sign-changing solutions with cnoidal profiles, Natali \textit{et al.} in \cite{NataliMoraesLorenoPastor} proved the orbital stability of cnoidal waves for the cubic NLS equation. Specifically, they demonstrated the existence of a frequency threshold $\omega^*>0$ such that the cnoidal standing wave is orbitally stable within the subspace of $H^1_{\rm per}$ consisting of mean-zero functions. In the quintic case, Moraes \& Loreno in \cite{BittencourtLoreno2022} addressed the orbital stability of cnoidal waves within the subspace of even functions in $H^1_{\rm per}$, following the approach in \cite{grillakis2, ShatahStrauss}. However, we have identified a subtle inconsistency in the application of the Vakhitov-Kolokolov stability criterion in that work. Consequently, the previous stability conclusions require revision, a task we undertake in Section \ref{section-orbital}.

The literature on NLS systems has also seen significant progress. Pastor in \cite{Pastor} studied a cubic NLS system with specific coupling constants, proving orbital stability for dnoidal standing waves under periodic perturbations of the same period, based on the framework in \cite{grillakis1, grillakis2}. Additionally, by employing the theories from \cite{GallayHaragusJDE, grillakis1}, the author established spectral stability results for periodic waves concerning localized and bounded perturbations.

Similarly, Hakkaev in \cite{Hakkaev2019} considered a cubic NLS system of the form
\begin{equation}\label{NLS-system-cubic}
	\left\{ \begin{array}{l}
		i u_t + u_{xx} + \kappa_1 |u|^2 u + \gamma v^2 \overline{u} = 0 \\
		i v_t + v_{xx} + \kappa_2 |v|^2 v + \gamma u^2 \overline{v} = 0
	\end{array}
\right.
\end{equation}
where $u, v : \mathbb{R} \times (0, +\infty) \to \mathbb{C}$,  $\kappa_1,\kappa_2>0$ and $\gamma \geq 0$. The author investigated the spectral stability of semi-trivial standing waves of the form $(u,v) = (\varphi,0)$ with a dnoidal profile, showing they are orbitally stable for $\gamma < \kappa_1$. Furthermore, the study determined that for $\kappa_1 < \gamma \leq 3\kappa_1$, these semi-trivial waves become spectrally unstable, while for $\gamma = \kappa_1$, stability is maintained. 

In a related direction, Natali \& Moraes in \cite{NataliBittencourt2024} examined the existence and spectral stability of multiple periodic waves for \eqref{NLS-system-cubic} of the form \((u(x,t), v(x,t)) = \left( e^{i \omega t} \varphi(x), e^{i \omega t} B \varphi(x) \right)\), where $B>0$. Their analysis utilized the Krein signature theory as developed in \cite{KapitulaKevrekidisSandstedeI, KapitulaKevrekidisSandstedeII} by Kapitula, Kevrekidis \&  Sandstede, with results depending on the parameters $\kappa_1$, $\kappa_2$, and $\gamma$. Furthermore, the authors established spectral stability results for solutions with dnoidal profiles in $H^1_{\rm per}$, as well as for cnoidal profiles within both $H^1_{\rm per}$ and the subspace of odd functions. It is worth noting that the technical challenges found by the authors regarding cnoidal waves in the full $H^1_{\rm per}$ space motivated the present study of the quintic case.

To the best of our knowledge, investigations concerning the spectral and orbital stability of the quintic Schrödinger system \eqref{NLS-system} remain largely underexplored in the existing literature. This gap constitutes the main motivation for the present work. Specifically, we aim to provide a detailed analysis of the spectral stability properties for system \eqref{NLS-system}, clarifying several fundamental points that have not yet been fully addressed, particularly regarding the role of sign-changing configurations.

Motivated by the considerations above, we outline the primary objectives of this paper. First, we consider multiple periodic standing wave solutions for \eqref{NLS-system} of the form
\begin{equation}\label{multiplesol}
    (u(x,t), v(x,t)) = e^{i\omega t}(\varphi(x), B \varphi(x)), \quad (x,t) \in \mathbb{R} \times \mathbb{R}_+
\end{equation}
where $\varphi: \mathbb{R} \rightarrow \mathbb{R}$ is an $L$-periodic function, $L>0$,  $\omega \in \mathbb{R}$ represents the wave frequency, and $B \in \mathbb{R}$ is a constant. We should note that in \cite{NataliBittencourt2024}, the parameter $B$ could be restricted to non-negative values due to the reflection symmetry $v \mapsto -v$ present in the cubic system \eqref{NLS-system-cubic}. In contrast, our quintic system \eqref{NLS-system} does not possess this particular symmetry; hence, we must consider the more general case where $B$ can be any real number.

In this paper, we focus on periodic sign-changing waves $\varphi$ with a cnoidal profile -- a case that presented significant technical challenges in \cite{NataliBittencourt2024}. Subsequently, since the system \eqref{NLS-system} in the periodic setting is invariant under spatial translations,   by applying a spatial translation of $\frac{L}{4}$ to the right, we obtain a profile of snoidal type. This translated solution is odd-symmetric, a property that allows us to overcome several difficulties encountered when analyzing cnoidal waves over the whole space $H^1_{\rm per}$. To construct these periodic solutions, we substitute the ansatz \eqref{multiplesol} into \eqref{NLS-system}, which yields the following coupled system:
\begin{equation}\label{coupledsystem}
\begin{cases}
-\varphi'' + \omega \varphi - (\kappa + \gamma B^3)\varphi^5 = 0 \\
-\varphi'' + \omega \varphi - (\kappa B^4 + \gamma B \varphi^5) = 0. % Note: substituição direta costuma manter o B no termo linear
\end{cases}
\end{equation}
To guarantee the existence of such standing wave solutions, we impose the following condition
\begin{equation}\label{Constantsassumptions}
    \kappa + \gamma B^3 = \kappa B^4 + \gamma B.
\end{equation}
which is an algebraic equation for the unknown $B \in \mathbb{R}$. The solutions to this equation are given by
\begin{equation}\label{Bvalue}
B = B_+: = \frac{\gamma + \sqrt{\gamma^2 - 4\kappa^2}}{2\kappa}, \quad B = B_-: = \frac{\gamma - \sqrt{\gamma^2 - 4\kappa^2}}{2\kappa}, \quad B = 1, \quad \text{and} \quad B = -1.
\end{equation}
Based on these four possible values for $B$, our analysis is naturally divided into four distinct cases. Consequently, under the condition \eqref{Constantsassumptions}, in view of \eqref{coupledsystem}, the periodic function $\varphi$ satisfies the following quintic scalar ordinary differential equation:
\begin{equation}\label{mainEDOquinticsystemIntroduction}
     -\varphi'' + \omega \varphi - (\kappa + \gamma B^3)\varphi^5 = 0.
\end{equation}

An essential mathematical aspect of the ordinary differential equation \eqref{mainEDOquinticsystemIntroduction} is the existence of $L$-periodic solutions with a cnoidal profile. Indeed, based on the results in \cite{BittencourtLoreno2022} and \cite{NataliCardosoAmaral}, we can guarantee that the function
\begin{equation*}\label{cnoidalsolutionIntroduction}
\varphi(x) = \theta \frac{a \, \text{cn} \left( \frac{4{\rm K}(k)}{L} x, k \right)}{\sqrt{1 - q \, \text{sn}^2 \left( \frac{4{\rm K}(k)}{L} x, k \right)}}, \quad x \in \mathbb{R}
\end{equation*}
is a solution to \eqref{mainEDOquinticsystemIntroduction}, where $k \in (0,1)$ is the elliptic modulus and ${\rm K}(k)$ denotes the complete elliptic integral of the first kind. The parameters $a$ and $q$ depend smoothly on $k \in (0,1)$ and $L>0$, while the frequency $\omega>0$ is given by
\begin{equation}\label{valuew}
\omega = \frac{16 {\rm K}(k)^2 \sqrt{k^4 - k^2 + 1}}{L^2},
\end{equation}
with the scaling factor defined as $\theta := (\kappa + \gamma B^3)^{-1/4}$.

Furthermore, by defining the translated function $\psi(x) = \varphi\left( x + \frac{L}{4} \right)$, for all $x \in \mathbb{R}$, and employing some Jacobian identities from \cite{byrd}, we obtain an $L$-periodic odd solution to \eqref{mainEDOquinticsystemIntroduction} with a snoidal profile, expressed as
\begin{equation}\label{psisolutionintro}
\psi(x) = \theta \frac{ \tilde{a} \, \text{sn}\left( \frac{4{\rm K}(k)}{L}x,k\right)}{\sqrt{ 1 - \tilde{q} \, \text{sn}^2 \left( \frac{4 {\rm K}(k)}{L} x, k \right)}}, \quad x \in \mathbb{R}
\end{equation}
where the parameters $\tilde{a}$ and $\tilde{q}$ also depend smoothly on $k \in (0,1)$ and $L>0$, for the same frequency $\omega$ as in \eqref{valuew}.

\begin{remark}
    For the solutions introduced above, we can construct a smooth curve of $L$-periodic waves $\omega \in \big( \tfrac{4\pi^2}{L^2}, +\infty \big) \longmapsto \varphi_\omega, \psi_\omega \in H^2_{per}$ satisfying \eqref{mainEDOquinticsystemIntroduction} for each considered case (see Propositions \ref{cnoidalcurve} and \ref{snoidalcurve}).
\end{remark}

In another important direction, it is worth noting that system \eqref{NLS-system} admits the conserved quantities $E, F: H^1_{\text{per}} \times H^1_{\text{per}} \to \mathbb{R}$, defined as
\begin{equation}\label{EnergyFunctional}
E(u,v) = \frac{1}{2} \int_0^L \left( |u_x|^2 + |v_x|^2 - \frac{\kappa}{3} \left( |u|^6 + |v|^6 \right) - \gamma \text{Re}(u^3 \overline{v}^3) \right)\; dx
\end{equation}
and
\begin{equation}\label{MassFunctional}
F(u,v) = \frac{1}{2} \int_0^L \left( |u|^2 + |v|^2 \right) \; dx,
\end{equation}
which represent the energy and mass of the system, respectively. By employing the energy functional \eqref{EnergyFunctional} and following the arguments in \cite{Cazenave}, one can show that the Schrödinger system \eqref{NLS-system} is locally well-posed in the energy space $H^1_{\text{per}} \times H^1_{\text{per}}$. More formally, we have the following result:

\begin{theorem}[Local Well-Posedness]
The Cauchy problem associated with system \eqref{NLS-system} with initial data $U_0 = (u_0, v_0) \in H_{\text{per}}^1 \times H_{\text{per}}^1$ is locally well-posed. More precisely, for each $U_0 \in H_{\text{per}}^1 \times H_{\text{per}}^1$, there exists a time $T > 0$ and a unique solution $U = (u, v) \in C([0,T]; H_{\text{per}}^1 \times H_{\text{per}}^1)$ such that $U(0) = U_0$. Moreover, for each $T_0 \in (0,T)$, the mapping
\[
U_0 \in H_{\text{per}}^1 \times H_{\text{per}}^1 \longmapsto U \in C([0,T_0]; H_{\text{per}}^1 \times H_{\text{per}}^1)
\]
is continuous.
\end{theorem}
 
Next, we describe the framework for establishing the spectral stability of synchronized periodic waves with respect to perturbations of the same period $L>0$. For clarity, we identify the complex-valued solution $U = (u,v)$ of system \eqref{NLS-system} with its real and imaginary parts as $U = (\text{Re } u, \text{Re } v, \text{Im } u, \text{Im } v) \in \mathbb{R}^4$. 

In this setting, the stationary wave is represented by $\Phi = (\varphi, B\varphi, 0, 0)$. We consider a linear perturbation of the form
\begin{equation}\label{U-1}
	U(x,t) = e^{i \omega t} (\Phi(x) + W(x,t)), \quad (x,t) \in \mathbb{R} \times \mathbb{R}_+
\end{equation} 
where $W(x,t) = (\text{Re } w_1, \text{Re } w_2, \text{Im } w_1, \text{Im } w_2)$ is a small perturbation in $\mathbb{R}^4$. Substituting \eqref{U-1} into \eqref{NLS-system} and linearizing around $\Phi$ by neglecting higher-order terms, we obtain the linearized evolution equation:
\begin{equation}\label{spectral}
	\frac{d}{dt} W(x,t) = J \mathcal{L} W(x,t), \quad (x,t) \in \mathbb{R} \times \mathbb{R}_+.
\end{equation}
In this expression, $J$ denotes the standard $4 \times 4$ matrix
\begin{equation}\label{Jmatrix}
J = \begin{bmatrix}
0 & \text{Id}_{2 \times 2} \\
		-\text{Id}_{2 \times 2} & 0    
\end{bmatrix},
\end{equation}
 and $\mathcal{L}$ is the self-adjoint linearized operator defined by
\begin{equation}\label{L-1}
	\mathcal{L} = (-\partial_x^2 + \omega) \text{Id}_{4 \times 4} - \varphi^4 S,
\end{equation}
where $S$ is a constant matrix depending on the system parameters $\kappa, \gamma,$ and $B$ given as
\begin{equation}\label{Smatrix}
    S = \begin{bmatrix}
5\kappa + 2\gamma B^3 & 3\gamma B^2 & 0 & 0 \\
3\gamma B^2 & 5\kappa B^4 + 2\gamma B & 0 & 0 \\
0 & 0 & \kappa - 2\gamma B^3 & 3\gamma B^2 \\
0 & 0 & 3\gamma B^2 & \kappa B^4 - 2\gamma B
\end{bmatrix}.
\end{equation}
Here, given $n \in \mathbb{N}$, we denote by $\text{Id}_{n \times n}$ is the $ n\times n$ identity matrix.

To analyze the spectral properties of the system, we assume the separation of variables $W(x,t) = e^{\lambda t}w(x)$, which leads to the eigenvalue problem:
\begin{equation*}
	J \mathcal{L} w = \lambda w.
\end{equation*} 

The spectral problem above enables us to introduce the central concept that motivated our study (as introduced in a general context in \cite[Section 5.3]{AnguloBook}). The stability of the periodic wave is then characterized by the spectrum of the operator $J \mathcal{L}$, denoted by $\sigma(J \mathcal{L})$, as follows.

\begin{definition}[Spectral Stability]\label{Spectral Stability Definition}
We say that the stationary wave \(\Phi\) is \textit{spectrally stable}  with respect to periodic perturbations having the same period $L>0$  if \(\sigma(J\mathcal{L}) \subset i\mathbb{R}\). Otherwise, if there exists at least one eigenvalue \(\lambda \in \mathbb{C}\) associated with the operator \(J\mathcal{L}\) that has a positive real part, we say that \(\Phi\) is \textit{spectrally unstable}.
\end{definition}

%\begin{definition}\label{def-spectralstability}
%	The stationary wave $\Phi$ is said to be spectrally stable with respect to periodic perturbations of the same period $L$ if $\sigma(J \mathcal{L}) \subset i \mathbb{R}$. Conversely, if the spectrum $\sigma(J \mathcal{L})$ contains at least one eigenvalue $\lambda$ with $\text{Re}(\lambda) > 0$, then $\Phi$ is said to be spectrally unstable.
%\end{definition}

Next, with the aim of clarifying our approach, given a linear operator $\mathcal{T}$ defined in a subspace of a Hilbert space, denote by ${\rm n}(\mathcal{T}), {\rm z}(\mathcal{T}) \in \mathbb{N}$ the number of negative eigenvalues of $\mathcal{T}$ and the dimension of the kernel of $\mathcal{T}$.

Our main tool for analyzing spectral stability or instability will be the classical framework developed by Kapitula, Kevrekidis \&  Sandstede in \cite{KapitulaKevrekidisSandstedeI} and \cite{KapitulaKevrekidisSandstedeII}. This approach is particularly powerful for determining the spectral properties of abstract Hamiltonian systems. Specifically, when ${\rm z}(\mathcal{L}) = n \in \mathbb{N}$ and the set $\{ \Theta_i \in {\rm D}(\mathcal{L}) \; ; \; i \in \{1,\ldots,n\}\} \subset {\rm Ker}(\mathcal{L})$ is linearly independent, their method allows us to construct a matrix $V \in \mathbf{M}_{n \times n}(\mathbb{R})$ with entries defined by
\begin{equation}\label{Ventries}
V_{ij}=( \mathcal{L}^{-1} J \Theta_i, J \Theta_j )_{L^2 \times L^2},
\end{equation}
for $i,j \in \{1,2,\cdots,n\}$ and provided that $V$ is invertible,  the following formula is valid:
\begin{equation}\label{KreinIndex}
 \mathcal{K_{\rm Ham}}=\text{n}(\mathcal{L}) - \text{n}(V) 
\end{equation}
where $ \mathcal{K_{\rm Ham}}:=k_r + k_c + k_{-}$ is called the Hamiltonian--Krein Index (\cite[Definition 7.1.4]{KapitulaPromislow}), which is a very useful tool to obtain spectral stability and instability of periodic waves. The formula \eqref{KreinIndex} can also be rigorously established by means of the result in \cite[Theorem 7.1.5]{KapitulaPromislow}, known as the Hamiltonian--Krein Index Theorem. Here, $k_r, k_c, k_{-} \in \mathbb{N}$ are such that:
\begin{itemize}
    \item $k_r$ denotes the number of positive real eigenvalues of the operator $\mathcal{L}$ (counting multiplicities);
    \item $k_c$ denotes the number of complex eigenvalues with positive real part of the operator $\mathcal{L}$;
    \item $k_{-}$ denotes the number of purely imaginary eigenvalues with negative Krein signature of the operator $\mathcal{L}$.
\end{itemize}
We note that the values $k_c$ and $k_{-}$ are always even numbers. It follows that if $\mathcal{K_{\rm Ham}}$ is odd, then $k_r \geq 1$, and consequently, we obtain a result of spectral instability. Moreover, if $\mathcal{K_{\rm Ham}} = 0$, then $k_r = k_c = k_{-} = 0$, and in this case, we obtain a result of spectral stability. Based on these conclusions, we will later be able to establish our results on spectral stability and instability (see Theorems \ref{teorema-cnoidal} and \ref{teorema-snoidal}).

Bearing in mind the above, we see that a study of the spectrum of the operator $\mathcal{L}$ is essential to determine a result of spectral stability or instability. However, since $\mathcal{L}$ is not a diagonal operator, this analysis can be rather difficult or complicated. To overcome this difficulty, we need to study the spectrum of $\mathcal{L}$ by means of a diagonal operator, that is, we need to obtain a similarity transformation of the operator $\mathcal{L}$ using a diagonal operator. This means that we need to find a real orthogonal matrix\footnote{An orthogonal matrix is a square matrix $A$ that is invertible such that its inverse $A^{-1}$ coincides with its transpose $A^T$. For more details about orthogonal matrices and their main properties see \cite[Section 4.6]{Williams}.} % (see \cite[Section 6.5]{Friedberg}).}
 $U\in \mathbf{M}_{4 \times 4}(\mathbb{R})$ and a diagonal matrix $M\in \mathbf{M}_{4 \times 4}(\mathbb{R})$ such that
\begin{equation}\label{similaritymatrixequation}
S= U  M  U^{-1}.   
\end{equation}
We should note that, in this case, $U$ is a orthogonal matrix satisfying $U^{-1}=U^T=U^*$, since $U$ is a real matrix and $U^*$ denotes de adjoint matrix\footnote{Adjoint or conjugate transpose as defined in \cite[Section 6.1]{Friedberg}.} of $U$. The equality \eqref{similaritymatrixequation} allows us to conclude that the operator $\mathcal{L}$ can be diagonalized
 under a similarity transformation as
\begin{equation}\label{auxiliarylinearizedoperatorrelation}
    \mathcal{L}= U \widetilde{\mathcal{L}} U^{-1},
\end{equation}
where
\begin{equation}\label{auxiliarylinearizedoperatordefinition}
 \widetilde{\mathcal{L}}= \begin{bmatrix}
\mathcal{L}_1 & 0 & 0 & 0 \nonumber\\
0 & \mathcal{L}_3 & 0 & 0 \nonumber\\
0 & 0 & \mathcal{L}_2 & 0 \nonumber\\
0 & 0 & 0 & \mathcal{L}_4
\end{bmatrix}
\end{equation}
and, for each $i \in \{1,2,3,4\}$, the operators $\mathcal{L}_i:H^2_{\rm per} \subset L^2_{\rm per} \longrightarrow L^2_{\rm per}$ are given by
\begin{equation}\label{HillOperators}
\mathcal{L}_i = -\partial_x^2 + \omega - \beta_i \varphi^4,
\end{equation}
in which $\beta_i \in \mathbb{R},\; i \in \{1,2,3,4\}$ are numbers to be determined. The operators introduced in \eqref{HillOperators}, commonly referred to as Hill operators are well established in the spectral theory of periodic differential operators and it is well known that they are closed, unbounded and self-adjoint with a closed range in $L^2_{\rm per}$ (see \cite[Section 7.2]{AnguloBook}). In addition, by \eqref{auxiliarylinearizedoperatorrelation} and using the orthogonality of the matrix $U$ and Sylvester’s Law of Inertia (\cite[Theorem 4.2]{Pelinovskybook}), we are able to study the number of zero and negative eigenvalues of the operator $\mathcal{L}$ by analyzing the number of zero and negative eigenvalues of the diagonal operator $\widetilde{\mathcal{L}}$. More precisely,
\begin{equation}\label{SylvesterInertialLaw}
    {\rm n}(\mathcal{L}) = {\rm n}(U \widetilde{\mathcal{L}} U^{-1}) = {\rm n}(\widetilde{\mathcal{L}}) = \sum_{i=1}^{4} {\rm n}(\mathcal{L}_i) \quad \text{and} \quad {\rm z}(\mathcal{L}) = {\rm z}(U \widetilde{\mathcal{L}} U^{-1}) = {\rm z}(\widetilde{\mathcal{L}}) = \sum_{i=1}^{4} {\rm z}(\mathcal{L}_i).
\end{equation}

Let us point out that, for each $B$ given in \eqref{Bvalue}, using standard linear algebra arguments, we are able to show the existence of the matrices $U$ and $M$ mentioned above such that
\begin{equation}\label{Mmatrix}
M = \begin{bmatrix}
\beta_1 & 0       & 0       & 0       \\
0       & \beta_3 & 0       & 0       \\
0       & 0       & \beta_2 & 0       \\
0       & 0       & 0       & \beta_4 \\
\end{bmatrix}
\end{equation}
where $\beta_i$, $i \in \{1,2,3,4\}$, are the real eigenvalues of the matrix $S$ expressed in terms of $\kappa$, $\gamma$ and $B$, and the matrix $U$ is constructed from a set of orthonormal eigenvectors associated with these eigenvalues.% and hence $U$ is a orthogonal matriz satisfying $U^{-1}=U^T=U^*$, since $U$ is a real matrix and $U^*$ denotes de adjoint matrix\footnote{Adjoint or conjugate transpose as defined in \cite[Section 6.1]{Friedberg}.} of $U$.

Therefore, it should be emphasized that, in order to obtain the spectral analysis of the operator $\mathcal{L}$, it is enough to know the non-positive spectrum of the operators $\mathcal{L}_i$, for each $i \in \{1,2,3,4\}$. In this sense, we will next describe the results obtained in each case for each of the Hill operators. First, we observe that the spectral analysis of the operators $\mathcal{L}_i,\; i \in \{1,2,3,4\}$, depends on the wave $\varphi$ and the numbers $\beta_1, \beta_2, \beta_3$, and $\beta_4$. Next, based on the spectral theory developed in \cite{BittencourtLoreno2022}, we have precise information regarding the non-positive spectrum of $\mathcal{L}_1$ and $\mathcal{L}_2$. Specifically, for the cnoidal profile solution $\varphi$, for $\mathcal{L}_1$ we have  ${\rm n}(\mathcal{L}_1) = 2$ with $\text{Ker}(\mathcal{L}_1) = [\varphi']$, while for $\mathcal{L}_2$ we obtain ${\rm n}(\mathcal{L}_2) = 1$ with $\text{Ker}(\mathcal{L}_2) = [\varphi]$. To characterize the non-positive spectrum of $\mathcal{L}_3$ and $\mathcal{L}_4$, we employ a periodic version of the Comparison Theorem (\cite[Theorem 2.2.2]{Eastham}), which allows us to determine the distribution of negative eigenvalues through a chain of inequalities involving the operators $\mathcal{L}_i$, $i \in \{1,2,3,4\}$.

The high number of negative eigenvalues of $\mathcal{L}_1$ and $\mathcal{L}_2$ for the cnoidal profile presents significant technical challenges when applying the comparison theorem. To overcome this difficulty, we utilize the odd solution with snoidal profile $\psi$ and restrict the operator $\mathcal{L}$ to the subspace of odd functions, denoted by $\mathcal{L}_{\text{odd}}$. In this odd setting, we prove that ${\rm n}(\mathcal{L}_{1,\text{odd}}) = 1$ and ${\rm n}(\mathcal{L}_{2,\text{odd}}) = 0$. This spectral reduction enables a more direct application of the comparison theorem, leading to a more robust spectral stability result for the standing wave $\Psi = (\psi, B\psi,0,0)$ within the subspace of odd functions.

Another key element of our approach is the determination of ${\rm n}(V)$. This is achieved by analyzing the sign of $\det(V)$, using the fact that the determinant equals the product of the eigenvalues. A cornerstone of this analysis is deriving convenient and simplified expressions for the entries $V_{ij}$, for each $i,j \in \{1,2,\cdots, n\}$, of the matrix $V$, as defined in \eqref{Ventries}. To this end, by ensuring the orthogonality of the matrix $U$ (and in particular $U^* = U^T = U^{-1}$), we deduce from \eqref{auxiliarylinearizedoperatorrelation} the following fundamental relation:
\begin{equation}\label{adjointrelatioapplication}
    (\mathcal{L}^{-1} f, g)_{L^2 \times L^2} = (U\widetilde{\mathcal{L}}^{-1}U^{*} f, g)_{L^2 \times L^2} = (\widetilde{\mathcal{L}}^{-1}U^{-1} f, U^{-1}g)_{L^2 \times L^2},
\end{equation}
for all $f, g \in H^2_{\text{per}} \times H^2_{\text{per}}$. This identity is crucial for computing the entries of $V$. In addition, a crucial step in the analysis of $\det(V)$ is the study of the quantities
$
\mathcal{I} := \left( \mathcal{L}_1^{-1}(\varphi), \varphi \right)_{L^2}$
and $
\mathcal{J} := \left( \mathcal{L}_2^{-1}(\varphi'), \varphi' \right)_{L^2}.
$
To this end, we use the existence of a smooth curve
$
\omega \in \big( \tfrac{4\pi^2}{L^2}, +\infty \big) \longmapsto \varphi = \varphi_\omega
$
of periodic solutions, and show via algebraic arguments that $\mathcal{I} < 0$, thereby providing a slight improvement over the approach presented in \cite{BittencourtLoreno2022} (see Remarks \ref{rem:correction0} and \ref{rem:correction} for further details).
Moreover, by relying on standard Functional Analysis arguments, we are able to avoid numerical computations and prove that $\mathcal{J} > 0$.  Finally, our goal is to show that the difference between ${\rm n}(\mathcal{L})$ and ${\rm n}(V)$ is either zero or an odd integer. If this difference does not satisfy the stability criteria, that is if $\mathcal{K_{\rm Ham}}$ is a non-zero even number, then the spectral stability of the waves $\Phi = (\varphi, B\varphi, 0, 0)$ or $\Psi = (\psi, B\psi, 0, 0)$ remains inconclusive.

Summarizing the considerations above, our main result regarding the spectral instability of multiple periodic waves with a cnoidal profile is established as follows:

\begin{theorem}\label{teorema-cnoidal}
     Let $L>0$ be fixed and $\omega \in \left( \frac{4\pi^2}{L^2}, +\infty \right)$. Let the periodic wave solution $\varphi=\varphi_\omega \in H^1_{\text{per}}$ of \eqref{mainEDOquinticsystem} with the cnoidal profile given in \eqref{cnoidal-solution}. If $B = 1$ and $\gamma = 2\kappa$, then the standing wave $\Phi = (\varphi, \varphi, 0, 0)$ is spectrally unstable in $\mathbb{H}^1_{\text{per}}$.%provided that $\gamma = 2\kappa$. }
\end{theorem}

Due to the high number of negative eigenvalues of $\mathcal{L}$ and the fact that the difference $\mathcal{K_{\rm Ham}}={\rm n}(\mathcal{L}) - {\rm n}(V)$ is even for most values of $B$ in \eqref{Bvalue}, we restrict our analysis to the subspace of odd functions. By considering the wave $\Psi = (\psi, B\psi, 0,0)$, we establish our main result concerning the spectral stability of multiple periodic waves with a snoidal profile:

\begin{theorem}\label{teorema-snoidal}
    Let $L>0$ be fixed and $\omega \in \left( \frac{4\pi^2}{L^2}, +\infty \right)$. Let the periodic solution $\psi=\psi_\omega \in H^1_{\text{per,odd}}$ of \eqref{mainEDOquinticsystem} with the snoidal profile given in \eqref{psisolutionintro}. Regarding the spectral stability of the synchronized periodic wave $\Psi = (\psi, B \psi, 0, 0) \in \mathbb{H}^1_{\text{per,odd}}$, the following assertions hold:
    \begin{enumerate}
        \item[(i)] If $B = \frac{\gamma \pm \sqrt{\gamma^2 - 4\kappa^2}}{2\kappa}$, then $\Psi$ is spectrally unstable.
        \item[(ii)] In the case $B = 1$, the wave $\Psi$ is spectrally unstable for $\gamma \in (0,2\kappa)$, and spectrally stable for $\gamma \in \{0\} \cup [2\kappa, +\infty)$.
        \item[(iii)] In the case $B = -1$, the wave $\Psi$ is spectrally stable provided that $\gamma = 0$.
    \end{enumerate}
\end{theorem}

The remainder of this paper is structured as follows. In Section \ref{SectionExistenceSolutions}, we demonstrate the existence of smooth curves of periodic standing wave solutions with cnoidal and snoidal profiles for equation \eqref{mainEDOquinticsystemIntroduction}. A detailed spectral investigation of the linearized operator $\mathcal{L}$ is also provided in Section \ref{SectionSpectralAnalysis}. The spectral stability and instability of multiple wave solutions with cnoidal (over $H^1_{\text{per}}$) and snoidal (over $H^1_{\text{per,odd}}$) profiles are discussed in Section \ref{Spectral stability Section} and \ref{spectral-stability-odd-space}, respectively. Finally, in Section \ref{section-orbital}, we provide remarks on a potential orbital stability result for the cnoidal standing wave solution within the subspace $H^1_{\text{per,odd}}$.

\vspace{0.5cm}

\noindent \textbf{Notation.} Let $s\geq0$ and $L>0$. Let $\mathcal{P} = C^\infty_{\rm per}$ denote the collection of all the functions $f : \mathbb{R} \to \mathbb{C}$ which are $C^\infty$ and periodic with period $L > 0$. The topological dual of $\mathcal{P}$ will be denoted by $\mathcal{P}'$. The set $\mathcal{P}'$ is called the set of all \textit{periodic distributions}. The Sobolev space
$H^s_{\rm per}:=H^s_{\rm per}([0,L])$ consists of all  $f \in \mathcal{P}'$ such that
$$
\|f\|^2_{H^s}:= L \sum_{k=-\infty}^{\infty}(1+k^2)^s|\widehat{f}(k)|^2 <\infty
$$
where $\widehat{f}$ is the periodic Fourier transform of $f$. The space $H^s_{\rm per}$ is a  Hilbert space with the inner product denoted by $(\cdot, \cdot)_{H^s}$. When $s=0$, the space $H^s_{\rm per}$ is isometrically isomorphic to the space  $L^2([0,L])$ and will be denoted by $L^2_{\rm per}:=H^0_{\rm per}$ (see, e.g., \cite{Iorio}). The norm and inner product in $L^2_{\rm per}$ will be denoted by $\|\cdot \|_{L^2}$ and $(\cdot, \cdot)_{L^2}$. Explicitly, $(f,g)_{L^2}=\int_0^L f(x) \overline{g(x)}\; dx$ and $\|f\|_{L^2}=\sqrt{(f,f)_{L^2}}$, for all $f, g \in L^2_{\rm per}$.  We need to emphasize here, for $r,s \geq 0$ and given $(f,g) \in H^r_{\rm per} \times H^s_{\rm per}$ the pair $(f,g)$ is composed of two complex functions and so we can write $(f,g)=({\rm Re}\,f, {\rm Re}\,g,{\rm Im}\,f, {\rm Im}\,g).$

For $s\geq0$, we denote
$$
H^s_{\rm per,odd}:=\{ f \in H^s_{\rm per} \; ; \; f \:\; \text{is an odd function}\}.
$$
endowed with the norm and inner product in $H^s_{\rm per}$. In addition,
$$
\mathbb{H}^s_{\rm per}:= H^s_{\rm per} \times H^s_{\rm per} \times H^s_{\rm per} \times H^s_{\rm per} \; \; \text{ and } \; \; \mathbb{H}^s_{\rm per,odd}:=H^s_{\rm per,odd} \times H^s_{\rm per,odd} \times H^s_{\rm per,odd} \times H^s_{\rm per,odd}
$$
equipped with their usual norms and scalar products.

%When necessary and since $\mathbb{C}$ can be identified with $\mathbb{R}^2$, notations above can be also used in the complex/vectorial case in the following sense: for $f\in \mathbb{H}_{\rm per}^s$ we have $f=f_1+if_2\equiv (f_1,f_2)$, where $f_i\in H_{\rm per}^s$, $i=1,2$.

Given $k \in (0, 1)$, the symbols $\sn(\cdot, k), \dn(\cdot, k)$ and $\cn(\cdot, k)$ represent the Jacobi elliptic functions of \textit{snoidal}, \textit{dnoidal}, and \textit{cnoidal} type, respectively. For $\eta \in \left[0, \frac{\pi}{2}\right]$, the notation ${\rm F}(\eta, k)$ and $\E(\eta, k)$  represents the complete elliptic integrals of the first and second kind, respectively, and  we denote by $\K(k)={\rm F}\left(\frac{\pi}{2},k\right)$ and $\E(k)=\E\left(\frac{\pi}{2},k\right)$. For a precise definition and additional properties of elliptic integrals and elliptic Jacobi functions see \cite{byrd}.

\vspace{0.5cm}
\section{Existence of a Smooth Curve of Periodic Waves}\label{SectionExistenceSolutions}

Let $L>0$ be fixed. Our aim in this section is to establish the existence of even and odd $L$-periodic sign-changing solutions $\varphi : \mathbb{R} \to \mathbb{R}$ to the following ordinary differential equation:
\begin{equation}\label{mainEDOquinticsystem}
	-\varphi''+\omega \varphi -(\kappa+\gamma B^3)\varphi^5=0,
\end{equation}
where $\omega>0$.

In this context, concerning the auxiliary equation
\begin{equation}\label{AuxilaryquinticEDO}
	- \phi'' + \omega \phi - \phi^5 = 0,
\end{equation}
for a periodic function $\phi: \mathbb{R} \to \mathbb{R}$ with period $L > 0$, it was shown in \cite[Section 4.3]{NataliCardosoAmaral} (see also \cite{BittencourtLoreno2022}) that for each $k \in (0,1)$, equation \eqref{AuxilaryquinticEDO} admits sign-changing periodic solutions with a cnoidal profile given by
\begin{equation}\label{cnoidalsolution}
	\phi(x) = \frac{a \, \text{cn} \left( \frac{4{\rm K}(k)}{L} x, k \right)}{\sqrt{1 - q \, \text{sn}^2 \left( \frac{4{\rm K}(k)}{L} x, k \right)}}, \quad x \in \mathbb{R}
\end{equation}
where
\begin{equation}\label{avaleuCnoidal}
	a = \frac{2}{L} \left[ \left( 2 - k^2 + 2 \sqrt{k^4 - k^2 + 1}\right) L^2 \, {\rm K}(k)^2 \right]^{\frac{1}{4}}
\end{equation}
and
\begin{equation}\label{qvaleuCnoidal}
	q = -1 + k^2 - \sqrt{k^4 - k^2 + 1}.
\end{equation}
Furthermore, the wave frequency $\omega>0$, which depends on $k \in (0,1)$ and $L>0$, is given by
\begin{eqnarray}\label{valuewCnoidal}
	\omega =  \frac{16{\rm K}(k)^2 \sqrt{k^4 - k^2 +1 }}{L^2}.
\end{eqnarray}

It is important to observe that $\phi$ is an even function. To deal with the problem under different scenarios, we can obtain an odd solution of \eqref{AuxilaryquinticEDO} by considering an $\frac{L}{4}$-translation of $\phi$, namely 
$$
\tilde{\psi}(x) = \phi \left(x+\frac{L}{4}\right), \quad x \in \mathbb{R}.
$$ By employing formulas (120.02), (121.00), and (122.03) from \cite{byrd}, we derive an odd sign-changing solution of \eqref{AuxilaryquinticEDO} given by
\begin{equation}\label{cnoidal-solution-odd}
	\tilde{\psi}(x) = \frac{\tilde{a} \, \text{sn} \left( \frac{4{\rm K}(k)}{L} x, k \right)}{\sqrt{1 - \tilde{q} \, \text{sn}^2 \left( \frac{4{\rm K}(k)}{L} x, k \right)}}, \quad x \in \mathbb{R},
\end{equation}
where %the parameters $\tilde{a}$ and $\tilde{q}$ are expressed as
\begin{equation}\label{tildea-tildeq}
	\tilde{a} = - \frac{a \, \sqrt{1-k^2}}{\sqrt{1-q}} \quad \text{and} \quad \tilde{q} = \frac{k^2 - q}{1-q}.
\end{equation}

Next, let $\phi$ be an $L$-periodic solution of \eqref{AuxilaryquinticEDO}, as defined in \eqref{cnoidalsolution}; we define \begin{equation}\label{transformation-solution}
	\varphi=\theta \phi,
\end{equation}
 where
\begin{equation}\label{theta}
	\theta:=\frac{1}{\sqrt[4]{\kappa+\gamma B^3}},
\end{equation}
and $B>0$ is given in \eqref{Bvalue}. Consequently, $\varphi$ satisfies \eqref{mainEDOquinticsystem} if and only if it is expressed through the transformation $\varphi = \theta \phi$. The same reasoning applies to the odd solution $\psi$ to obtain an odd solution of \eqref{mainEDOquinticsystem}.

We emphasize that, for a fixed $L>0$,  the wave frequency $\omega \in \big( \tfrac{4\pi^2}{L^2}, +\infty \big)$ depends smoothly on $k \in (0,1)$. Based on these considerations regarding the existence of $L$-periodic wave solutions to \eqref{AuxilaryquinticEDO} and the transformation $\varphi = \theta \phi$, we establish the following results:

\begin{proposition}[Existence of Solutions with Cnoidal Profile]\label{cnoidalcurve}
	Let $L>0$ be fixed. Equation \eqref{mainEDOquinticsystem} admits $L$-periodic solutions with a cnoidal profile of the form
	\begin{equation}\label{cnoidal-solution}
		\varphi(x) = \frac{1}{\sqrt[4]{(\kappa + \gamma B^3)}}\frac{ a \, {\rm cn}\left( \frac{4{\rm K}(k)}{L}x,k\right)}{\sqrt{ 1 - q \, {\rm sn}^2 \left( \frac{4 {\rm K}(k)}{L} x, k \right)}},
	\end{equation}
	where the parameters $a \in \left( 2\sqrt{\tfrac{\pi}{L}}, +\infty \right)$, $q \in (-2,-1)$, and the frequency $\omega \in \left( \frac{4\pi^2}{L^2},+\infty\right)$ depend smoothly on $k\in (0,1)$ and $L>0$, as given in \eqref{avaleuCnoidal}, \eqref{qvaleuCnoidal}, and \eqref{valuewCnoidal}, respectively. In addition, the mapping
	$$\omega \in \left( \tfrac{4\pi^2}{L^2},+\infty \right) \longmapsto \varphi_\omega=\varphi \in H^2_{\rm per}$$
	is a smooth curve of periodic solutions for \eqref{mainEDOquinticsystem}.
\end{proposition}

\begin{proposition}[Existence of Solutions with Snoidal Profile]\label{snoidalcurve}
	Let $L>0$ be fixed. Equation \eqref{mainEDOquinticsystem} admits $L$-periodic solutions with a snoidal profile of the form
	\begin{equation}\label{snoidal-solution}
		\psi(x) = \frac{1}{\sqrt[4]{(\kappa + \gamma B^3)}}\frac{ \tilde{a} \, {\rm sn}\left( \frac{4{\rm K}(k)}{L}x,k\right)}{\sqrt{ 1 - \tilde{q} \, {\rm sn}^2 \left( \frac{4 {\rm K}(k)}{L} x, k \right)}},
	\end{equation}
	where the parameters $\tilde{a} \in \left( \frac{-2\sqrt{3}}{3} \sqrt{\frac{\pi}{L}} , 0 \right)$, $\tilde{q} \in \left( \frac{2}{3}, 1 \right)$, and the frequency $\omega \in \left( \frac{4\pi^2}{L^2},+\infty\right)$ depend smoothly on $k\in (0,1)$ and $L>0$, as given in \eqref{tildea-tildeq} and \eqref{valuewCnoidal}, respectively. Moreover, the mapping
	$$\omega \in \left( \tfrac{4\pi^2}{L^2},+\infty \right) \longmapsto \psi_\omega=\psi \in H^2_{\rm per}$$
	is a smooth curve of periodic solutions for \eqref{mainEDOquinticsystem}.
\end{proposition}

\begin{remark}
	It is worth noting that the snoidal solution $\psi$ of \eqref{mainEDOquinticsystem} given in \eqref{snoidal-solution} is an $\frac{L}{4}$-translation of the cnoidal solution $\varphi$ given in \eqref{cnoidal-solution}; specifically, $\psi(x) =  \varphi \left(x + \frac{L}{4} \right)$, for all $x \in \mathbb{R}$.
\end{remark}

\vspace{0.5cm}
\section{Spectral analysis}\label{SectionSpectralAnalysis} 

Let $L>0$ be fixed and consider  the solution $\varphi$ of \eqref{mainEDOquinticsystem} with the cnoidal profile given in \eqref{cnoidal-solution}. In this section, we study  the non-positive spectrum of the linear operator $\mathcal{L}$ defined in \eqref{L-1}. 

Based on the information about the non-positive spectrum of the Hill operators $\mathcal{L}_i$, for $i=1,2,3,4$, given in \eqref{HillOperators}, we determine the number of negative eigenvalues of $\mathcal{L}$, the dimension of its kernel, and identify the elements that constitute ${\rm Ker}(\mathcal{L})$. To achieve this, we utilize key results from Natali \& Cardoso \cite{NataliCardoso2013}, Angulo \& Natali \cite{AnguloNatali2009}, and Moraes \& Loreno \cite{BittencourtLoreno2022}. To ensure clarity, we separate the evaluation of the non-positive spectrum of the operator $\mathcal{L}$ according to the constant $B \in \mathbb{R}$.

A cornerstone of our spectral analysis is to derive a similarity transformation for the operator $\mathcal{L}$, as described in \eqref{similaritymatrixequation} and \eqref{auxiliarylinearizedoperatorrelation}. Therefore, we shall begin this spectral analysis by explicitly providing such a transformation.

\subsection*{Case 1: $B = B_+$.}\label{Spectral analysis- Case i)}

Considering $B=B_+$ as given in \eqref{Bvalue}, we have $\gamma \geq 2\kappa$. However, if $\gamma = 2\kappa$, then $B = 1$, which is another case that will be studied later. Thus, in this case, we only need to consider $\gamma > 2\kappa$. By using Maple software, we are able to obtain an orthogonal matrix $U$ and a matrix $M$ whose entries are given by:

\begin{equation}\label{beta1GeneralB+}
\beta_1 = \frac{1}{4} \frac{3\sqrt{2}\gamma^2 s(\kappa, \gamma) + 7\gamma^4 + 7\gamma^3\sqrt{\gamma^2 - 4\kappa^2} - 24\gamma^2\kappa^2 - 10\gamma\sqrt{\gamma^2 - 4\kappa^2}\kappa^2 + 20\kappa^4}{\kappa^3},
\end{equation}
\begin{equation}\label{beta2GeneralB+}
\beta_2 = \frac{1}{4} \frac{3\sqrt{2}\gamma^2 s(\kappa, \gamma) - \gamma^4 - \gamma^3\sqrt{\gamma^2 - 4\kappa^2} - 2\gamma\sqrt{\gamma^2 - 4\kappa^2}\kappa^2 + 4\kappa^4}{\kappa^3},
\end{equation}
\begin{equation}\label{beta3GeneralB+}
\beta_3 = -\frac{1}{4} \frac{3\sqrt{2}\gamma^2 s(\kappa, \gamma) - 7\gamma^4 - 7\gamma^3\sqrt{\gamma^2 - 4\kappa^2} + 24\gamma^2\kappa^2 + 10\gamma\sqrt{\gamma^2 - 4\kappa^2}\kappa^2 - 20\kappa^4}{\kappa^3}
\end{equation}
and
\begin{equation}\label{beta4GeneralB+}
\beta_4 = -\frac{1}{4} \frac{3\sqrt{2}\gamma^2 s(\kappa, \gamma) + \gamma^4 + \gamma^3\sqrt{\gamma^2 - 4\kappa^2} + 2\gamma\sqrt{\gamma^2 - 4\kappa^2}\kappa^2 - 4\kappa^4}{\kappa^3},
\end{equation}
where
\begin{equation}\label{skappagamma}
s(\kappa, \gamma) := \sqrt{\gamma^4 + \gamma^3\sqrt{\gamma^2 - 4\kappa^2} - 4\gamma^2\kappa^2 - 2\gamma\sqrt{\gamma^2 - 4\kappa^2}\kappa^2 + 2\kappa^4}.
\end{equation}

The entries $\beta_i$, for each $i \in \{1,2,3,4$\}, of the matrix $M$ are obtained by calculating the eigenvalues of the matrix $S$ given in \eqref{Smatrix}. Consequently, the desired matrix $U$ has as its columns the eigenvectors associated with $\beta_i$, normalized to form an orthonormal set of vectors. So, $U$ is an orthogonal matrix, satisfying $U^{-1} = U^T = U^*$, $\det(U) = 1$, \eqref{similaritymatrixequation} and \eqref{auxiliarylinearizedoperatorrelation}.

Again, since $\gamma > 2\kappa$, we can use Maple to perform algebraic manipulations and conclude that the following chain of inequalities holds:
\begin{equation}\label{betainequalities}
    \beta_4 < \beta_2 < \beta_3 < \beta_1 \quad \text{and} \quad \beta_4 < 0.
\end{equation}

\subsection*{Case 2: $B = B_-$.}

Now, considering $B = B_-$ given in \eqref{Bvalue}, we have the same scenario $\gamma > 2\kappa$ as discussed in \textbf{Case 1}. Then, with the help of Maple software, we are able to obtain the entries of the matrix $M$ in \eqref{Mmatrix} which are given by:
\begin{equation}\label{beta1GeneralB-}
\beta_1=\frac{1}{4}\,{\frac {3\,\sqrt {2}{\gamma}^{2}\; s(\kappa,\gamma)+7\,{\gamma}^{4}-7\,
{\gamma}^{3}\sqrt {{\gamma}^{2}-4\,{k}^{2}}-24\,{\gamma}^{2}{k}^{2}+10
\,\gamma\,\sqrt {{\gamma}^{2}-4\,{k}^{2}}{k}^{2}+20\,{k}^{4}}{{k}^{3}}
},
\end{equation}
\begin{equation}\label{beta2GeneralB-}
\beta_2=\frac{1}{4}\,{\frac {3\,\sqrt {2}{\gamma}^{2}\; s(\kappa,\gamma)-{\gamma}^{4}+{
\gamma}^{3}\sqrt {{\gamma}^{2}-4\,{k}^{2}}+2\,\gamma\,\sqrt {{\gamma}^
{2}-4\,{k}^{2}}{k}^{2}+4\,{k}^{4}}{{k}^{3}}},
\end{equation}
\begin{equation}\label{beta3GeneralB-}
\beta_3=-\frac{1}{4}\,{\frac {3\,\sqrt {2}{\gamma}^{2}\; s(\kappa,\gamma)-7\,{\gamma}^{4}+7\,
{\gamma}^{3}\sqrt {{\gamma}^{2}-4\,{k}^{2}}+24\,{\gamma}^{2}{k}^{2}-10
\,\gamma\,\sqrt {{\gamma}^{2}-4\,{k}^{2}}{k}^{2}-20\,{k}^{4}}{{k}^{3}}
}
\end{equation}
and 
\begin{equation}\label{beta4GeneralB-}
\beta_4=-\frac{1}{4}\,{\frac {3\,\sqrt {2}{\gamma}^{2}\; s(\kappa,\gamma)+{\gamma}^{4}-{
\gamma}^{3}\sqrt {{\gamma}^{2}-4\,{k}^{2}}-2\,\gamma\,\sqrt {{\gamma}^
{2}-4\,{k}^{2}}{k}^{2}-4\,{k}^{4}}{{k}^{3}}}.
\end{equation}
Here, $s(\kappa, \gamma)$ is given in \eqref{skappagamma}. Despite the similarity, it is straightforward to see that the values $\beta_i$ in \eqref{beta1GeneralB+}-\eqref{beta4GeneralB+} are different from those shown in \eqref{beta1GeneralB-}-\eqref{beta4GeneralB-}. Again, by using Maple, we show that the eigenvalues in \eqref{beta1GeneralB-}-\eqref{beta4GeneralB-} also satisfy the inequality in \eqref{betainequalities}. In addition, the desired matrix $U$ has as its columns the eigenvectors associated with $\beta_i$, $i \in \{1,2,3,4\}$, normalized so as to form an orthonormal set. Consequently, $U$ is an orthogonal matrix satisfying
$
U^{-1} = U^{T} = U^{*}, \det(U) = 1,
$
and relations \eqref{similaritymatrixequation} and \eqref{auxiliarylinearizedoperatorrelation} hold.

\subsection*{Case 3: $B=1$}\label{Spectral analysis- Case iii)}

One of the solutions of equation \eqref{Constantsassumptions} is $B=1$. In this case, we initially do not have any condition on $\gamma \geq 0$ or $\kappa > 0$. Using Maple, we calculate the eigenvalues of the matrix $S$ given in \eqref{Smatrix} to conclude that the matrix $M$ is given by
\begin{equation*}
M= \begin{bmatrix}
5\kappa+5\gamma & 0 & 0 & 0 \\ 
0 & 5\kappa-\gamma & 0 & 0 \\ 
0 & 0 & \kappa+\gamma & 0 \\ 
0 & 0 & 0 & \kappa-5\gamma
\end{bmatrix}.
\end{equation*}
The entries of the matrix $M$ are:
\begin{equation}
    \beta_1 = 5\kappa + 5\gamma, \quad \beta_2 = \kappa+\gamma, \quad \beta_3 = 5\kappa-\gamma, \quad \text{ and } \quad \beta_4 = \kappa - 5 \gamma.
\end{equation}
As in the previous cases, we obtain an orthogonal matrix $U$ such that $U^{-1}=U^T=U^*$ and $\det(U)=1$ using the eigenvectors associated to $\beta_i$. To derive a chain of inequalities for $\beta_i$ as in the preceding cases, one must consider different scenarios for $\gamma$. More precisely, we study $\gamma$ into four initial scenarios: $\gamma = 0$, $\gamma \in (0,2\kappa)$, $\gamma = 2\kappa$, and $\gamma \in (2\kappa, +\infty)$. Thus, we analyze the subcases of $B=1$ to investigate the non-positive spectrum of $\mathcal{L}$. 

\subsubsection*{Case 3.1: $\gamma = 0$.} 
In this case, we have $\beta_1 = 5\kappa$, $\beta_2 = \kappa$, $\beta_3 = 5\kappa$, and $\beta_4 = \kappa$. Thus, $\beta_1 = \beta_3$, $\beta_2 = \beta_4$ and, consequently, we have that $\beta_4 = \beta_2 < \beta_3 = \beta_1$.

\subsubsection*{Case 3.2: $\gamma \in (0,2\kappa)$.} 
Here, considering $\gamma < 2 \kappa$, we can conclude that $\beta_4 < \beta_2 < \beta_3 < \beta_1$.

\subsubsection*{Case 3.3: $\gamma = 2\kappa$.} 
With $\gamma = 2\kappa$, we have $\beta_2 = \beta_3$, obtaining $\beta_4 < \beta_2 = \beta_3 < \beta_1$.

\subsubsection*{Case 3.4: $\gamma \in (2\kappa, +\infty)$.} 
In this case, there is a swap between $\beta_2$ and $\beta_3$, namely, $\beta_4 < \beta_3 < \beta_2 < \beta_1$.

Therefore, if $B = 1$, we have different chains of inequalities for $\beta_i$ depending on $\gamma$. This is fundamental for our spectral analysis. Finally, we consider the case $B = -1$.

\subsection*{Case 4: $B = -1$.} 
Now, we consider $\gamma \in [0,\kappa)$ due to the constant $\theta$ given in \eqref{theta}. Using Maple, we conclude that the matrix $M$ is given by
\begin{equation*}
M= \begin{bmatrix}
5\kappa-5\gamma & 0 & 0 & 0 \\ 
0 & 5\kappa+\gamma & 0 & 0 \\ 
0 & 0 & \kappa-\gamma & 0 \\ 
0 & 0 & 0 & \kappa+5\gamma
\end{bmatrix}.
\end{equation*}
The entries of the matrix $M$ are:
\begin{equation}
    \beta_1 = 5\kappa - 5\gamma, \quad \beta_2 = \kappa-\gamma, \quad \beta_3 = 5\kappa+\gamma, \quad \text{ and } \quad \beta_4 = \kappa + 5 \gamma.
\end{equation}
As before, we obtain an orthogonal matrix $U$ using the normalized eigenvectors associated to $\beta_i$. In this case, we have two partitions of $\gamma$: $\gamma = 0$ and $\gamma \in (0,\kappa)$. If $\gamma = 0$, then $\beta_1 = \beta_3$ and $\beta_2 = \beta_4$, concluding that $\beta_4 = \beta_2 < \beta_3 = \beta_1$. On the other hand, for $\gamma \in (0,\kappa)$, we conclude that $\beta_2 < \beta_4 < \beta_1 < \beta_3$ for $\gamma \in \left(0, \frac{2}{5} \kappa\right)$ and $\beta_2 < \beta_1 < \beta_4 < \beta_3$ for $\gamma \in \left(\frac{2}{5}\kappa, \kappa\right)$.

From the above, we have obtained the linearization of the operator $\mathcal{L}$, the entries $\beta_i$ of the matrix $M$, and the chain of inequalities for $\beta_i$ for each case of $B$ and possible subcases of $\gamma$. We are now prepared to analyze the non-positive spectrum of the operator $\mathcal{L}$ for each case evaluated previously.

\subsection{Spectral analysis for $\mathcal{L}$ in $\mathbb{H}^2_{\rm per}$ -- solution with cnoidal profile}\label{spectral-analysis-cnoidal}

After explicitly providing a similarity transformation of $\mathcal{L}$ for the different cases of $B$ presented in \eqref{Bvalue}, we are able to evaluate the non-positive spectrum of the operator $\mathcal{L}$. More precisely, we perform this analysis considering $\varphi$ as the cnoidal solution of \eqref{mainEDOquinticsystem} given in \eqref{cnoidal-solution}. Using the transformations established thus far, we aim to obtain the characterization of the non-positive spectrum of each Hill operator $\mathcal{L}_i = -\partial_x^2 + \omega - \beta_i \varphi^4$, for $i=1,2,3,4$.

Initially, it is crucial to address the analysis of the operators $\tilde{\mathcal{L}}_1$ and $\tilde{\mathcal{L}}_2$ given by
\begin{equation*}
    \tilde{\mathcal{L}}_1 = -\partial_x^2 + \omega \phi - 5\phi^4 \quad \text{and} \quad \tilde{\mathcal{L}}_2 = -\partial_x^2 + \omega \phi - \phi^4,
\end{equation*}
where $\phi$ is the solution of \eqref{AuxilaryquinticEDO} with the cnoidal profile given in \eqref{cnoidalsolution}. More precisely, following the arguments presented in \cite{BittencourtLoreno2022}, we have that 
\begin{equation}\label{L1-normal}
    {\rm n}(\tilde{\mathcal{L}}_1) = 2, \quad {\rm z}(\tilde{\mathcal{L}}_1) = 1, \quad \text{and} \quad {\rm Ker}(\tilde{\mathcal{L}}_1) = [\phi']
\end{equation}
and
\begin{equation}\label{L2-normal}
    {\rm n}(\tilde{\mathcal{L}}_2) = 1, \quad {\rm z}(\tilde{\mathcal{L}}_2) = 1, \quad \text{and} \quad {\rm Ker}(\tilde{\mathcal{L}}_2) = [\phi].
\end{equation}

By using \eqref{transformation-solution}, \eqref{L1-normal}, and \eqref{L2-normal}, and performing algebraic simplifications with the aid of Maple software, we conclude that
\begin{equation*}
    \mathcal{L}_1 = -\partial_x^2 + \omega - \beta_1 \varphi^4 = \tilde{\mathcal{L}}_1 \quad \text{ and } \quad \mathcal{L}_2 = -\partial_x^2 + \omega - \beta_2 \varphi^4 = \tilde{\mathcal{L}}_2.
\end{equation*}

Therefore, it follows that 
\begin{equation}\label{nL1}
    {\rm n}({\mathcal{L}}_1) = 2, \quad {\rm z}({\mathcal{L}}_1) = 1, \quad \text{and} \quad {\rm Ker}({\mathcal{L}}_1) = [\varphi']
\end{equation}
and
\begin{equation}\label{nL2}
    {\rm n}({\mathcal{L}}_2) = 1, \quad {\rm z}({\mathcal{L}}_2) = 1, \quad \text{and} \quad {\rm Ker}({\mathcal{L}}_2) = [\varphi].
\end{equation}

Using the known results for the Hill operators $\mathcal{L}_1$ and $\mathcal{L}_2$, we are able to apply the Comparison Theorem in the Periodic Case (see \cite[Theorem 2.2.2]{Eastham} or also \cite[Appendix]{Pastor}) to obtain the behavior of the non-positive spectrum of $\mathcal{L}_3$ and $\mathcal{L}_4$ in each case of $B$ and, consequently, the non-positive spectrum of $\mathcal{L}$. For this, we need to separate our analysis in the four cases of $B$ again.

\subsection*{Case 1: $B = B_+$} Considering $B = B_+$ given in \eqref{Bvalue}, we have that $\beta_4 < \beta_2 < \beta_3 < \beta_1$ and $\beta_4 < 0$. Then, we can conclude that $\mathcal{L}_1 < \mathcal{L}_3 < \mathcal{L}_2 < \mathcal{L}_4$ where $\mathcal{L}_i < \mathcal{L}_j$ means that $(\mathcal{L}_i u,u)_{L^2} < (\mathcal{L}_j u,u)_{L^2}$ for all $u \in H^2_{\rm per}$, $u \neq 0$ and $i,j \in \{1,2,3,4\}$, $i \neq j$. Thus, using the Comparison Theorem in the Periodic Case with \eqref{nL1} and \eqref{nL2}, we have that ${\rm n}(\mathcal{L}_3) = 2$ and ${\rm z}(\mathcal{L}_3) = 0$. In addition, since $\beta_4 < 0$, we obtain that ${\rm n}(\mathcal{L}_4) = {\rm z}(\mathcal{L}_4) = 0$. Therefore, considering $B = B_+$, we conclude that 
\begin{equation}\label{spectralanalysis-B+-B-}
    {\rm n}(\mathcal{L}) = 5, \quad {\rm z}(\mathcal{L}) = 2 \quad \text{ and } \quad {\rm Ker}(\mathcal{L}) = [(\varphi', B\varphi',0,0), (0,0,\varphi,B\varphi)].
\end{equation}

\subsection*{Case 2: $B = B_-$} Here, since we have the same chain of inequalities of $\beta_i$, then we can perform the same analysis to conclude that the nonnegative spectrum of $\mathcal{L}$ also satisfies \eqref{spectralanalysis-B+-B-}.

\subsection*{Case 3: $B = 1$} This is the most complex scenario, requiring the analysis to be divided into four distinct partitions of $\gamma$. Although we can establish a chain of inequalities for each partition, these results alone do not suffice for our objectives. We shall examine, in each subcase, the underlying reasons for this insufficiency.

\subsubsection*{Case 3.1: $\gamma = 0$} In this case, since $\beta_4 = \beta_2 < \beta_3 = \beta_1$, we conclude that $\mathcal{L}_1 = \mathcal{L}_3 < \mathcal{L}_2 = \mathcal{L}_4$. Then, using \eqref{nL1} and \eqref{nL2}, we conclude that
\begin{equation*}%\label{spectralanalysis-B1-gamma0}
    {\rm n}(\mathcal{L}) = 6, \quad {\rm z}(\mathcal{L}) = 4 \quad \text{ and } \quad {\rm Ker}(\mathcal{L}) = [(\varphi',0,0,0), (0,\varphi',0,0), (0,0,\varphi,0), (0,0,0,\varphi)].
\end{equation*}

\subsubsection*{Case 3.2: $\gamma \in (0,2\kappa)$} Here, we have that $\beta_4 < \beta_2 < \beta_3 < \beta_1$. Since $\beta_3 \in (\beta_2, \beta_1)$, we use \eqref{nL1}, \eqref{nL2} and the Comparison Theorem to obtain that ${\rm n}(\mathcal{L}_3) = 2$ and ${\rm z}(\mathcal{L}_3) = 0$. However, we are able to conclude some result of $\mathcal{L}_4$ only if $\beta_4 < 0$ and this happens when $\gamma > \frac{\kappa}{5}$. More precisely, in this scenario, we have that $\mathcal{L}_4 > 0$, implying that ${\rm n}(\mathcal{L}_4) = {\rm z}(\mathcal{L}_4) = 0$. Therefore, considering $B = 1$ and $\gamma \in \left(\frac{\kappa}{5}, 2\kappa\right)$, we conclude that 
\begin{equation*}%\label{spectralanalysis-B1-gamma1}
    {\rm n}(\mathcal{L}) = 5, \quad {\rm z}(\mathcal{L}) = 2 \quad \text{ and } \quad {\rm Ker}(\mathcal{L}) = [(\varphi', B\varphi',0,0), (0,0,\varphi,B\varphi)].
\end{equation*}

\subsubsection*{Case 3.3: $\gamma = 2\kappa$} In this case, we have that $\beta_2 = \beta_3$ and, in addition, $\beta_4 < \beta_3 = \beta_2 < \beta_1$. This implies that $\mathcal{L}_1 < \mathcal{L}_2 = \mathcal{L}_3 < \mathcal{L}_4$. Since $\beta_2 = \beta_3$, the properties of $\mathcal{L}_2$ are extended to $\mathcal{L}_3$. On the other hand, as $\beta_4 = \kappa - 5 \gamma$, we have that $\beta_4 = - 9 \gamma < 0$ implying that $\mathcal{L}_4 > 0$. Under the above, we conclude that
\begin{equation}\label{spectralanalysis-B1-gamma2kappa}
    {\rm n}(\mathcal{L}) = 4, \quad {\rm z}(\mathcal{L}) = 3 \quad \text{ and } \quad {\rm Ker}(\mathcal{L}) = [(\varphi', B\varphi',0,0), (0,0,\varphi,B\varphi), (-B\varphi, \varphi,0,0)].
\end{equation}

\subsubsection*{Case 3.4: $\gamma \in (2\kappa, +\infty)$} In this last case, we obtained that $\beta_4 < \beta_3 < \beta_2 < \beta_1$. Since $\beta_3$ is not between $\beta_1$ and $\beta_2$, the Comparison Theorem cannot be conveniently applied here. On the other hand, since $\beta_3 = 5\kappa - \gamma$, we have that $\beta_3 < 0$ only if $\gamma > 5\kappa$. In this case, we have that $\mathcal{L}_3 > 0$ and $\mathcal{L}_4 > 0$. Then, considering $\gamma \in (5\kappa, +\infty)  \subset (2\kappa, +\infty)$, we conclude that 
\begin{equation*}%\label{spectralanalysis-B1-gamma1}
    {\rm n}(\mathcal{L}) = 3, \quad {\rm z}(\mathcal{L}) = 2 \quad \text{ and } \quad {\rm Ker}(\mathcal{L}) = [(\varphi', B\varphi',0,0), (0,0,\varphi,B\varphi)].
\end{equation*}

\subsection*{Case 4: $B = -1$} In this scenario, we distinguish between two distinct cases for $\gamma$: $\gamma = 0$ and $\gamma \in (0,\kappa)$. On one hand, for $\gamma = 0$, it follows that $\beta_1 = \beta_3$ and $\beta_2 = \beta_4$. Consequently, analogous to Case 3.1 ($B=1$), we conclude that:
\begin{equation}\label{spectralanalysis-B1-gamma0}
    {\rm n}(\mathcal{L}) = 6, \quad {\rm z}(\mathcal{L}) = 4 \quad \text{ and } \quad {\rm Ker}(\mathcal{L}) = [(\varphi',0,0,0), (0,\varphi',0,0), (0,0,\varphi,0), (0,0,0,\varphi)].
\end{equation}
On the other hand, for $\gamma \in (0,\kappa)$, there exist eigenvalues $\beta_i$ such that $\beta_i > \beta_1$ implying that\footnote{  { For instance: $\mathcal{L}_3<\mathcal{L}_1$ if $0<\gamma<  \frac{2}{5}\kappa$ and $\mathcal{L}_4<\mathcal{L}_1$ if $\frac{2}{5}\kappa<\gamma< \kappa$.}} $\mathcal{L}_i < \mathcal{L}_1$. Under these conditions, the Comparison Theorem together with the spectral properties in \eqref{nL1} and \eqref{nL2} are not sufficient to characterize the non-positive spectrum of $\mathcal{L}_3$ and $\mathcal{L}_4$. Therefore, this particular case remains open in the literature.

\subsection{Spectral analysis for $\mathcal{L}$ in $H^2_{\rm per,odd}$ -- solution with snoidal profile}\label{spectral-analysis-snoidal}

Our aim now is to evaluate the non-positive spectrum of the operator $\mathcal{L}$ given in \eqref{L-1}, restricted to the subspace of odd functions $H^2_{\rm per,odd}$. To address this problem, we begin by evaluating the non-positive spectrum of $\mathcal{L} : \mathbb{H}^2_{\rm per} \rightarrow \mathbb{H}^2_{\rm per}$ defined by $\mathcal{L} = \left(-\partial_x^2 + \omega\right) {\rm Id}_{4 \times 4} - \psi^4 S$, where $S$ is given in \eqref{Smatrix} and $\psi$ is the snoidal solution of \eqref{mainEDOquinticsystem} obtained in \eqref{snoidal-solution}. In this case, since $\psi(x) = \varphi\left( x + \frac{L}{4} \right)$, for all $x \in \mathbb{R}$, the properties established in \eqref{nL1} and \eqref{nL2} extend to $\mathcal{L}_1 = -\partial_x^2 + \omega - \beta_i \psi^4$ for $i=1,2$. Consequently, we conclude that
\begin{equation}\label{nL1-psi}
    {\rm n}(\mathcal{L}_1) = 2, \quad {\rm z}(\mathcal{L}_1) = 1 \quad \text{ and } \quad {\rm Ker}(\mathcal{L}_1) = [\psi']
\end{equation}
and
\begin{equation}\label{nL2-psi}
    {\rm n}(\mathcal{L}_2) = 1, \quad {\rm z}(\mathcal{L}_2) = 1 \quad \text{ and } \quad {\rm Ker}(\mathcal{L}_2) = [\psi].
\end{equation}

However, we seek to evaluate the behavior of the non-positive spectrum of $\mathcal{L}$ in $\mathbb{H}^2_{\rm per,odd}$. By considering the restriction of $\mathcal{L}$ to the subspace of odd functions, we obtain the operator $\mathcal{L}_{\rm odd}: \mathbb{H}^2_{\rm per,odd} \rightarrow \mathbb{L}^2_{\rm per,odd}$, where $\mathcal{L}_{\rm odd} = U \tilde{\mathcal{L}}_{\rm odd} U^{-1}$ with
\begin{equation*}
 {\tilde{\mathcal{L}}_{\rm odd}}= \begin{bmatrix}
\mathcal{L}_{\rm 1,odd} & 0 & 0 & 0 \\
0 & \mathcal{L}_{\rm 3,odd} & 0 & 0 \\
0 & 0 & \mathcal{L}_{\rm 2,odd} & 0 \\
0 & 0 & 0 & \mathcal{L}_{\rm 4,odd}
\end{bmatrix}
\end{equation*}
and, for each $i \in \{1,2,3,4\}$, the operators $\mathcal{L}_{i,\rm odd}:H^2_{\rm per, odd} \subset L^2_{\rm per, odd} \longrightarrow L^2_{\rm  per, odd}$ are given by
\begin{equation*}
\mathcal{L}_{i,\rm odd} = -\partial_x^2 + \omega - \beta_i \psi^4,
\end{equation*}
where $\beta_i \in \mathbb{R}$ for $i \in \{1,2,3,4\}$ depends on the choice of $B$ in \eqref{Bvalue}.

Starting our analysis on the subspace of odd functions, and using \eqref{nL1-psi}, \eqref{nL2-psi}, and the fact that $\psi$ is an odd function, it follows that
\begin{equation*}
    {\rm z}(\mathcal{L}_{1,\rm odd}) = 0 \text{ with } {\rm Ker}(\mathcal{L}_{1,\rm odd}) = \{0\} \quad \text{ and } \quad {\rm z}(\mathcal{L}_{2,\rm odd}) = 1 \text{ with } {\rm Ker}(\mathcal{L}_{2,\rm odd}) = [\psi].
\end{equation*}
In addition, since $\mathcal{L}_{1,\rm odd}$ and $\mathcal{L}_{2,\rm odd}$ are Hill operators of the form $-\partial_x^2 + V(x)$, where $V$ is an even functional, we can apply the Floquet Theory in the periodic context as presented by Neves \cite{Neves} to conclude that the eigenfunction associated with the first eigenvalue of each operator is even. Also,  the eigenfunction associated to the second negative eigenvalue of the operator $\mathcal{L}_1$ should be a odd function.  Thus,  ${\rm n}(\mathcal{L}_{1,\rm odd}) \leq 1$ and  ${\rm n}(\mathcal{L}_{2,\rm odd}) = 0$. On the other hand,  since in any case we have $\beta_2<\beta_1$ by the Comparison Theorem and the fact that $\mathcal{L}_{1,\rm odd} < \mathcal{L}_{2,\rm odd}$, we have ${\rm n}(\mathcal{L}_{1,\rm odd}) = 1$. Summarizing the above, we conclude that
\begin{equation}\label{nL1-odd}
    {\rm n}(\mathcal{L}_{1,\rm odd}) = 1, \quad {\rm z}(\mathcal{L}_{1,\rm odd}) = 0 \quad \text{ and } \quad {\rm Ker}(\mathcal{L}_{1,\rm odd}) = \{0\}
\end{equation}
and
\begin{equation}\label{nL2-odd}
    {\rm n}(\mathcal{L}_{2,\rm odd}) = 0, \quad {\rm z}(\mathcal{L}_{2,\rm odd}) = 1 \quad \text{ and } \quad {\rm Ker}(\mathcal{L}_{2,\rm odd}) = [\psi].
\end{equation}
Therefore, we can observe that the number of negative eigenvalues of $\mathcal{L}_{1,\rm odd}$ and $\mathcal{L}_{2,\rm odd}$ decreases in comparison with $\mathcal{L}_1$ and $\mathcal{L}_2$. This fact allows us to use the Comparison Theorem in the periodic context \cite[Theorem 2.2.2]{Eastham} to establish the properties of the non-positive spectrum of $\mathcal{L}_{3,\rm odd}$ and $\mathcal{L}_{4,\rm odd}$ in each case of $B$ present in \eqref{Bvalue}.

\subsection*{Case 1: $B = B_+$} 
Considering $B = B_+$ as given in \eqref{Bvalue}, we have $\beta_4 < \beta_2 < \beta_3 < \beta_1$, which implies $\mathcal{L}_1 < \mathcal{L}_3 < \mathcal{L}_2 < \mathcal{L}_4$. By applying the Comparison Theorem with \eqref{nL1-odd} and \eqref{nL2-odd}, we find that ${\rm n}(\mathcal{L}_{3,\rm odd}) = 1$ and ${\rm z}(\mathcal{L}_{3,\rm odd}) = 0$. Additionally, we obtain ${\rm n}(\mathcal{L}_{4,\rm odd}) = {\rm z}(\mathcal{L}_{4,\rm odd}) = 0$. Therefore, for $B = B_+$, we conclude that 
\begin{equation}\label{spectralanalysis-B+-B-odd}
    {\rm n}(\mathcal{L}_{\rm odd}) = 2, \quad {\rm z}(\mathcal{L}_{\rm odd}) = 1 \quad \text{ and } \quad {\rm Ker}(\mathcal{L}_{\rm odd}) = [(0,0,\psi,B\psi)].
\end{equation}

\subsection*{Case 2: $B = B_-$} 
Since we have the same chain of inequalities for $\beta_i$ as in the case $B = B_+$, it follows that the non-positive spectrum of $\mathcal{L}$ also satisfies \eqref{spectralanalysis-B+-B-odd}.

\subsection*{Case 3: $B = 1$} 
In this case, the analysis must be divided into four distinct partitions of $\gamma$. 

\subsubsection*{Case 3.1: $\gamma = 0$} 
Here, since $\beta_4 = \beta_2 < \beta_3 = \beta_1$, we obtain $\mathcal{L}_1 = \mathcal{L}_3 < \mathcal{L}_2 = \mathcal{L}_4$. From \eqref{nL1-odd} and \eqref{nL2-odd}, we conclude that
\begin{equation*}%\label{spectralanalysis-B1-gamma0}
    {\rm n}(\mathcal{L}_{\rm odd}) = 2, \quad {\rm z}(\mathcal{L}_{\rm odd}) = 2 \quad \text{ and } \quad {\rm Ker}(\mathcal{L}_{\rm odd}) = [(0,0,\psi,0), (0,0,0,\psi)].
\end{equation*}

\subsubsection*{Case 3.2: $\gamma \in (0,2\kappa)$} 
In this subcase, we have $\beta_4 < \beta_2 < \beta_3 < \beta_1$. Using \eqref{nL1-odd}, \eqref{nL2-odd}, and the Comparison Theorem, we obtain ${\rm n}(\mathcal{L}_{3,\rm odd}) = 1$ and ${\rm z}(\mathcal{L}_{3,\rm odd}) = 0$. Since the first eigenvalue of ${\mathcal{L}_{2,odd}}$ is zero, the Comparison Theorem guarantees that ${\rm n}(\mathcal{L}_{4,\rm odd}) = 0$ and ${\rm z}(\mathcal{L}_{4,\rm odd}) = 0$. Thus, comparing this with Case 3.2 in $H^2_{\rm per}$, we conclude that 
\begin{equation*}%\label{spectralanalysis-B1-gamma1}
    {\rm n}(\mathcal{L}_{\rm odd}) = 2, \quad {\rm z}(\mathcal{L}_{\rm odd}) = 1 \quad \text{ and } \quad {\rm Ker}(\mathcal{L}_{\rm odd}) = [(0,0,\psi,B\psi)]
\end{equation*}
over the whole interval $(0,2\kappa)$.

\subsubsection*{Case 3.3: $\gamma = 2\kappa$} 
In this case, we have $\beta_2 = \beta_3$ and $\beta_4 < \beta_3 = \beta_2 < \beta_1$, which implies $\mathcal{L}_1 < \mathcal{L}_2 = \mathcal{L}_3 < \mathcal{L}_4$. Since $\beta_2 = \beta_3$, the properties of $\mathcal{L}_{2,\rm odd}$ extend to $\mathcal{L}_{3,\rm odd}$. Again, using \eqref{nL2-odd} and the Comparison Theorem, we obtain ${\rm n}(\mathcal{L}_{4,\rm odd}) = {\rm z}(\mathcal{L}_{4,\rm odd}) = 0$. Consequently, we conclude that
\begin{equation*}%\label{spectralanalysis-B1-gamma2kappa}
    {\rm n}(\mathcal{L}_{\rm odd}) = 1, \quad {\rm z}(\mathcal{L}_{\rm odd}) = 2 \quad \text{ and } \quad {\rm Ker}(\mathcal{L}_{\rm odd}) = [(0,0,\psi,B\psi), (-B\psi, \psi,0,0)].
\end{equation*}

\subsubsection*{Case 3.4: $\gamma \in (2\kappa, +\infty)$} 
In this final case, we have $\beta_4 < \beta_3 < \beta_2 < \beta_1$. From \eqref{nL2-odd} and the Comparison Theorem, it follows that ${\rm n}(\mathcal{L}_{3,\rm odd}) = {\rm z}(\mathcal{L}_{3,\rm odd}) = {\rm n}(\mathcal{L}_{4,\rm odd}) = {\rm z}(\mathcal{L}_{4,\rm odd}) = 0$. Thus,
\begin{equation}\label{spectralanalysis-B1-gamma-large}
    {\rm n}(\mathcal{L}_{\rm odd}) = 1, \quad {\rm z}(\mathcal{L}_{\rm odd}) = 1 \quad \text{ and } \quad {\rm Ker}(\mathcal{L}_{\rm odd}) = [(0,0,\psi,B\psi)].
\end{equation}
Thus, comparing this with Case 3.4 in $H^2_{\rm per}$, we also obtain this result over the whole interval $(2\kappa, +\infty)$. 

\subsection*{Case 4: $B = -1$} 
In this scenario, we consider two distinct cases for $\gamma$: $\gamma = 0$ and $\gamma \in (0,\kappa)$. For $\gamma = 0$, it follows that $\beta_1 = \beta_3$ and $\beta_2 = \beta_4$. Consequently, analogous to Case 3.1 ($B=1$), we conclude that:
\begin{equation}\label{spectralanalysis-B-minus1-gamma0}
    {\rm n}(\mathcal{L}_{\rm odd}) = 2, \quad {\rm z}(\mathcal{L}_{\rm odd}) = 2 \quad \text{ and } \quad {\rm Ker}(\mathcal{L}_{\rm odd}) = [(0,0,\psi,0), (0,0,0,\psi)].
\end{equation}
Conversely, for $\gamma \in (0,\kappa)$, the difficulty encountered in $H^2_{\rm per}$ also arises here, as there exist eigenvalues $\beta_i$ such that $\beta_i > \beta_1$, implying $\mathcal{L}_i < \mathcal{L}_1$. In this situation, the Comparison Theorem and other established spectral results do not provide sufficient information to characterize the non-positive spectrum of $\mathcal{L}_i$. Therefore, this particular case remains an open problem in the literature.

\vspace{0.5cm}
\section{Spectral stability}\label{Spectral stability Section}
In this section, we establish spectral stability results for the periodic multiple solution $\Phi = (\varphi, B\varphi,0,0)$, where $\varphi \in H^1_{\rm per}$ is assumed to have a cnoidal profile. To this end, it is necessary to determine the entries of the matrix $V \in \mathbf{M}_{n \times n}(\mathbb{R})$ defined in \eqref{Ventries}, where $n ={\rm z}(\mathcal{L})$. Specifically, we derive a simplified expression for the entries of $V$ by employing the transformation \eqref{adjointrelatioapplication} and the fact that $\Theta_i \in {\rm Ker}(\mathcal{L})$ for each $i \in \{1, 2, \dots, n\}$, namely
\begin{equation}\label{VentriesExplicity}
V_{ij} = \left( \mathcal{L}^{-1} J \Theta_i, J \Theta_j \right)_{L^2 \times L^2} = \left( \tilde{\mathcal{L}}^{-1} U^{-1} J \Theta_i, U^{-1} J \Theta_j \right)_{L^2 \times L^2}, \quad i,j \in \{1, 2, \dots, n\}.
\end{equation}

Our stability analysis further relies on the spectral properties of the operator $\mathcal{L}$ in \eqref{L-1}, as characterized in the previous section. Before analyzing the several settings of the matrix $V$ induced by the choice of $B$ in \eqref{Bvalue}, we provide some fundamental results concerning the Hill operators $\mathcal{L}_i$ for $i=1,2,3,4$. These preliminary are essential to the reader’s understanding and help streamline the subsequent computation of  ${\rm n}(V)$.

\begin{lemma}\label{positivity}
Let $H=(H,(\cdot,\cdot)_H)$ be a Hilbert space and  $\mathcal{A}:{\rm D}(\mathcal{A}) \subset H \to H$ be a densely defined and self-adjoint operator. If the spectrum $\sigma(\mathcal{A})$ of $\mathcal{A}$ is constituted only by an infinite discrete set of eigenvalues and satisfying $\sigma(\mathcal{A})\subset[0,+\infty)$, then there exists $\delta > 0$ such that 
\begin{equation}
	(\mathcal{A} v, v)_{H} \geq \delta \|v\|_{H}^2,\: \text{for all}\: v \in {\rm D}(\mathcal{A}) \cap {\rm Ker}(\mathcal{A})^\perp.
\end{equation}
\end{lemma}
\begin{proof}
    The proof of this result can be found in \cite[Remark 4.1]{NataliBittencourt2024}.
\end{proof}

\begin{comment}
\begin{proof}
In fact,	since $H$ is a Hilbert space, we have the decomposition $H = {\rm Ker}(\mathcal{A}) \oplus {\rm Ker}(\mathcal{A})^\perp$. From   in \cite[Theorem 6.17, page 178]{Kato}, we have
	\begin{equation*}
		\sigma( \mathcal{A} ) = \sigma \left( \mathcal{A} \big{|}_{{\rm Ker}(\mathcal{A})} \right) \cup \sigma\left( \mathcal{A} \big{|}_{{\rm Ker}(\mathcal{A})^\perp} \right).
	\end{equation*}
	On the other hand, we have that
	\begin{equation*}
	\sigma\left( \mathcal{A} \big{|}_{{\rm Ker}(\mathcal{A})^\perp} \right) = \sigma(\mathcal{A}) \setminus \{0\},
	\end{equation*}
	that is, the spectrum is bounded from below. The arguments in \cite[page 279]{Kato} imply that $ \mathcal{A} \big{|}_{{\rm Ker}(\mathcal{A})}$ is also bounded from below. This means that there exists $\delta > 0$ satisfying
	\begin{equation*}
		(\mathcal{A} v, v) \geq \delta \|v\|_{L^2_{\rm per}}^2 \text{ for all } v \in H^2_{\rm per} \cap {\rm Ker}(\mathcal{A})^\perp.
	\end{equation*}
\end{proof}
\end{comment}

\begin{lemma}\label{dif-norma}
Let $L>0$ and $\omega \in \left( \frac{4\pi^2}{L^2}, +\infty \right)$ be fixed. {If  $\varphi=\varphi_\omega$ is the periodic solution with cnoidal profile given in Proposition \ref{cnoidalcurve}, then   $\frac{d}{d\omega} \|\varphi\|_{L^2}^2 > 0$.}
\end{lemma}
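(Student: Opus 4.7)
The plan is to parametrize the problem by the elliptic modulus $k\in(0,1)$ and reduce the claim to explicit monotonicity statements in $k$. Since $L>0$ is fixed, Proposition~\ref{cnoidalcurve} provides a smooth bijection $k \longmapsto \omega(k)\in\bigl(\tfrac{4\pi^2}{L^2},+\infty\bigr)$ via $\omega(k)=16\,\K(k)^2\sqrt{k^4-k^2+1}/L^2$, and $N(k):=\|\varphi_\omega\|_{L^2}^2$ is smooth in $k$. By the chain rule,
$$
\frac{d}{d\omega}\|\varphi_\omega\|_{L^2}^2 \;=\; \frac{N'(k)}{\omega'(k)},
$$
so it suffices to prove $N'(k)>0$ and $\omega'(k)>0$ on $(0,1)$.

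First I would establish $\omega'(k)>0$. Using the classical formula $\K'(k)=(\E(k)-(1-k^2)\K(k))/(k(1-k^2))$ together with $\E(k)>(1-k^2)\K(k)$, I would differentiate $\K(k)^2\sqrt{k^4-k^2+1}$ and combine the two contributions: the one coming from $\K^2$ is strictly positive on $(0,1)$, whereas the one coming from $\sqrt{k^4-k^2+1}$ is negative on $(0,1/\sqrt{2})$ and positive on $(1/\sqrt{2},1)$. A direct algebraic rearrangement, consistent with the endpoint behavior $\omega(0^+)=4\pi^2/L^2$ and $\omega(1^-)=+\infty$, shows the sum is strictly positive throughout $(0,1)$.

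Next I would compute $N(k)$ explicitly. Setting $c=\kappa+\gamma B^3$, the rescaling $\varphi_\omega(x)=(\omega/c)^{1/4}\,U_k(\sqrt{\omega}\,x)$ yields $N(k)=F(k)/\sqrt{c}$, where $F(k)=\int_0^{P(k)}U_k(y)^2\,dy$ with $P(k)=4\K(k)(k^4-k^2+1)^{1/4}$ and $U_k$ the $P(k)$-periodic cnoidal solution of $-U''+U-U^5=0$; this decouples $L$ and $c$ from the problem. Substituting the explicit expression for $U_k$ (obtained from \eqref{cnoidal-solution} by specializing $c=1$ and $L=P(k)$), changing variables $u=(4\K(k)/P(k))y$, and using $\cn^2=1-\sn^2$ with partial fractions, the integral reduces to a linear combination of $\K(k)$ and the complete elliptic integral of the third kind $\Pi(q(k),k)$, with $q(k)$ as in \eqref{qvaleuCnoidal}.

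The main obstacle is verifying $F'(k)>0$ on $(0,1)$. Differentiation produces a lengthy expression in $\K$, $\E$, and $\Pi$ whose sign is not manifest. I would handle it by first clearing positive denominators, then using the quadratic identity $q^2+2(1-k^2)q-k^2=0$ satisfied by $q(k)$ (which follows from \eqref{qvaleuCnoidal}) to eliminate higher powers of $q$, and finally invoking the classical differentiation formulas for $\Pi$ in its characteristic and modulus (see \cite{byrd}) together with the identity $\K'$ above to reduce everything to $\K$ and $\E$. If a closed-form simplification proves too involved, the positivity can be confirmed by symbolic computation in Maple, consistent with the methodology adopted elsewhere in the paper. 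Combined with $\omega'(k)>0$, this yields $\frac{d}{d\omega}\|\varphi\|_{L^2}^2>0$.
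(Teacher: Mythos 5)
Your setup coincides with the paper's: both arguments fix $L$, parametrize by the modulus $k$, write $\frac{d}{d\omega}\|\varphi\|_{L^2}^2 = N'(k)/\omega'(k)$, reduce $N(k)$ to an explicit combination of $\K(k)$ and the third-kind integral $\Pi(q,k)$, and aim to prove $N'(k)>0$ and $\omega'(k)>0$ separately on $(0,1)$. Your rescaling $\varphi_\omega(x)=(\omega/c)^{1/4}U_k(\sqrt{\omega}\,x)$, which decouples $c=\kappa+\gamma B^3$ and $L$, is correct and is a clean normalization the paper does not bother with.

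The gap is that you never actually establish the two signs, and those signs are the entire content of the lemma. For $N'(k)$ you describe a reduction to a $\K,\E$ combination and then allow that "the positivity can be confirmed by symbolic computation in Maple." But determining this sign is precisely the delicate point: Remark \ref{rem:correction} records that the predecessor work \cite{BittencourtLoreno2022} computed this very quantity with the wrong sign, and Remark \ref{rem:correction0} emphasizes that the purpose of Lemma \ref{dif-norma} is to replace graphical/numerical verification by an algebraic proof. Deferring the one nontrivial sign determination to unspecified symbolic computation therefore leaves the lemma unproved. The paper's missing ingredient is concrete: after Maple reduces $\frac{d}{dk}\|\varphi\|_{L^2}^2$ to a quotient with manifestly positive denominator and numerator $\mathcal{N}(k)=\mathrm{a}(k)\K(k)+\mathrm{b}(k)\E(k)$ for explicit algebraic coefficients, one rewrites $\mathcal{N}(k)$ as the single integral $\int_0^{\pi/2}\bigl(\mathrm{a}(k)+\mathrm{b}(k)-\mathrm{b}(k)k^2\sin^2 t\bigr)\bigl(1-k^2\sin^2 t\bigr)^{-1/2}\,dt$ and uses $\bigl(1-k^2\sin^2 t\bigr)^{-1/2}\ge 1$ to bound it below by $\tfrac{\pi}{4}\bigl[2\bigl(\mathrm{a}(k)+\mathrm{b}(k)\bigr)-\mathrm{b}(k)k^2\bigr]$, whose positivity on $(0,1)$ is an elementary algebraic check. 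The same device disposes of $\omega'(k)>0$, where your argument is likewise asserted ("a direct algebraic rearrangement \ldots shows the sum is strictly positive") rather than carried out. You would need to supply an estimate of this kind, or some other explicit positivity certificate for the resulting $\K,\E$ combinations, to close the proof.
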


\begin{proof}
    Firstly, we note that $\omega$ depends smoothly on $k \in (0,1)$. By the chain rule, we have
    \begin{equation*}
        \frac{d}{d\omega} \|\varphi\|_{L^2}^2 = \frac{d}{dk} \|\varphi\|_{L^2}^2 \, \frac{dk}{d\omega} = \frac{d}{dk} \|\varphi\|_{L^2}^2 \, \left( \frac{d\omega}{dk}\right)^{-1}.
    \end{equation*}
    Our goal is to show the positivity of both $\frac{d}{dk} \|\varphi\|_{L^2}^2$ and $\frac{d\omega}{dk}$. First, using the expression for $\varphi$ given in \eqref{cnoidal-solution} and a change of variables, we obtain
    \begin{equation*}
        \|\varphi\|_{L^2}^2 = \int_0^L \varphi^2(x)\; dx = a^2 \int_0^L \frac{ \text{cn}^2 \left( \frac{4 \text{K}(k)}{L} x, k \right)}{ 1 - q \, \text{sn}^2 \left( \frac{4 \text{K}(k)}{L} x, k \right)} \; dx = \frac{a^2 L}{\text{K}(k)} \int_0^{\text{K}(k)} \frac{ \text{cn}^2(x,k)}{1-q \, \text{sn}^2(x,k)} \;dx.
    \end{equation*}
    Computing this integral via Maple and performing some simplifications, we find
    \begin{equation*}
        \|\varphi\|_{L^2}^2 = \frac{a^2 L}{\text{K}(k)} \left( \frac{\Pi(q,k) (q-1) + \text{K}(k)}{q} \right),
    \end{equation*}
    where $\Pi$ is the complete elliptic integral of the third kind defined by 
    \begin{equation*}
    \Pi(q, k) = \int_{0}^{\frac{\pi}{2}} \frac{d\theta}{(1 - q \sin^2 \theta) \sqrt{1 - k^2 \sin^2 \theta}}.
    \end{equation*}
    By substituting the expressions for $a$ and $q$ from \eqref{avaleuCnoidal} and \eqref{qvaleuCnoidal}, respectively, and employing Maple for further symbolic manipulation, we obtain
    \begin{equation}\label{ddkphi}
        \frac{d}{dk} \|\varphi\|_{L^2}^2 = \frac{\text{a}(k) \text{K}(k) + \text{b}(k) \text{E}(k)}{\text{r}(k) \sqrt{2-k^2+2\text{r}(k)} (k^2 - 1 - \text{r}(k))^2 k (1+\text{r}(k))}
    \end{equation}
    where $\text{r}(k) = \sqrt{k^4 - k^2 + 1}$, and the expressions $\text{a}(k)$ and $\text{b}(k)$ are given by
    \begin{equation*}
        \text{a}(k) = (-12k^6 + 42k^4 - 56k^2 + 32) \text{r}(k) + (12k^8 - 48k^6 + 82k^4 - 72k^2 + 32)
    \end{equation*}
    and
    \begin{equation*}
        \text{b}(k) = (-18k^4 + 40k^2 - 32) \text{r}(k) + (18k^6 - 50k^4 + 56k^2 - 32),
    \end{equation*}
    respectively. It is clear that the denominator of \eqref{ddkphi} is positive for all $k \in (0,1)$. On the other hand, the numerator $\mathcal{N}(k) = \text{a}(k) \text{K}(k) + \text{b}(k) \text{E}(k)$ satisfies
    \begin{equation*}
        \mathcal{N}(k) = \text{a}(k) \int_0^{\frac{\pi}{2}} \frac{dt}{\sqrt{ 1 - k^2 \sin^2(t)}} + \text{b}(k) \int_0^{\frac{\pi}{2}} \sqrt{ 1 - k^2 \sin^2(t)} \, dt = \int_0^{\frac{\pi}{2}} \frac{ \text{a}(k) + \text{b}(k) - \text{b}(k) k^2 \sin^2(t)}{ \sqrt{ 1 - k^2 \sin^2(t)}} \, dt,
    \end{equation*}
    which implies
    \begin{equation*}
        \mathcal{N}(k) \geq \int_0^{\frac{\pi}{2}} \left( \text{a}(k) + \text{b}(k) - \text{b}(k) k^2 \sin^2(t) \right) \, dt
    \end{equation*}
    for $t \in (0, \frac{\pi}{2})$ and $k \in (0,1)$. Thus, we obtain
    \begin{equation*}
        \mathcal{N}(k) \geq \left( \text{a}(k) + \text{b}(k) \right) \frac{\pi}{2} - \text{b}(k) k^2 \frac{\pi}{4} = \frac{\pi}{4} \underbrace{\left[ 2 \left( \text{a}(k) + \text{b}(k) \right) - \text{b}(k) k^2 \right]}_{:=\mathcal{M}(k)}.
    \end{equation*}
{Since $\mathcal{M}$ is a polynomial, it is easy to show that $\mathcal{M}$ is an increasing and positive function on $(0,1)$. Moreover, using Maple, we can verify that $\mathcal{M}(k)=0$ if and only if $k=0$. Thus, we also conclude that $\mathcal{N}(k) > 0$ for all $k \in (0,1)$, which implies $\frac{d}{dk} \|\varphi\|_{L^2}^2 > 0$ for all $k \in (0,1)$.}

    To establish that $\frac{d\omega}{dk}>0$ for all $k \in (0,1)$, we apply a similar argument. From the expression for $\omega$ in \eqref{valuewCnoidal}, we have
    \begin{equation*}
        \frac{d\omega}{dk} = \frac{16 \text{K}(k)}{L^2 (1-k^2) \, k \, \text{r}(k)} \underbrace{\left[ (2k^4 - 2k^2 + 2) \text{E}(k) + (- k^4 + 3k^2 - 2) \text{K}(k) \right]}_{:=\mathcal{P}(k)}.
    \end{equation*}
    We must show that $\mathcal{P}(k)$ is positive for all $k \in (0,1)$. Proceeding as before, we observe that
    \begin{equation*}
        \mathcal{P}(k) \geq \frac{\pi}{4} \left( 2k^4 + 2k^2 - (2k^4 - 2k^2 + 2)k^2 \right).
    \end{equation*}
{A direct verification of the polynomial on the right side of the inequality above shows that it is positive for all $k \in (0,1)$. Hence,  $\mathcal{P}(k) > 0$ for all $k \in (0,1)$, which yields $\frac{d\omega}{dk} > 0$ for all $k \in (0,1)$.  }    
%A direct verification via Maple shows that $\mathcal{P}(k) > 0$ for all $k \in (0,1)$, which yields $\frac{d\omega}{dk} > 0$ for all $k \in (0,1)$.

    Therefore, we conclude that $\frac{d}{d\omega} \|\varphi\|_{L^2}^2 > 0$ for all $k \in (0,1)$, as desired.
\end{proof}

\begin{remark}\label{rem:correction0}
    We provide an algebraic proof of the fact that $\frac{d}{d\omega} \|\varphi\|_{L^2}^2 > 0$ for all $\omega \in \big( \tfrac{4\pi^2}{L^2}, +\infty \big)$ in Lemma \ref{dif-norma}, in contrast to many studies in the literature whose proof is based on a graphical analysis of this quantity's behavior, as seen in \cite[Section 6.1]{BittencourtLoreno2022} and  \cite[Subsection 4.3]{NataliCardosoAmaral}. 
\end{remark}

\begin{remark}\label{rem:correction}
    It is important to clarify that in a previous work \cite[Subsection 6.1]{BittencourtLoreno2022}, an implementation error occurred regarding the Heumann Lambda function while computing $\frac{d}{dk} \|\varphi\|_{L^2}^2$. We have now corrected this inaccuracy and verified that the results presented in Lemma \ref{dif-norma} are fully consistent with the calculations performed via the Heumann Lambda function, employing the formula (410.03) or the alternative formula (411.03) as found in \cite[pages 225-226]{byrd}.
\end{remark}

\begin{lemma}\label{L1-cnoidal}
Let $L>0$ and $\omega \in \left( \frac{4\pi^2}{L^2}, +\infty\right)$ be fixed. If  $\varphi=\varphi_\omega$ is the periodic solution with cnoidal profile given in Proposition \ref{cnoidalcurve}, then 
\begin{equation}
    \left( \mathcal{L}_1^{-1}(\varphi), \varphi \right)_{L^2} < 0.
\end{equation}
\end{lemma}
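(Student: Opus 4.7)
The proof I would give is a short calculation that reduces the claim to the sign of $\frac{d}{d\omega}\|\varphi\|_{L^2}^2$, which is already supplied by Lemma \ref{dif-norma}. The key input is that Proposition \ref{cnoidalcurve} provides a smooth curve $\omega \mapsto \varphi_\omega$ of solutions to \eqref{mainEDOquinticsystem}, so we can legitimately differentiate the profile equation with respect to $\omega$.

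The plan is as follows. Writing $c := \kappa + \gamma B^3$ and recalling from the identification $\mathcal{L}_1 = \tilde{\mathcal{L}}_1$ in Section \ref{spectral-analysis-cnoidal} that $\mathcal{L}_1 = -\partial_x^2 + \omega - 5c\,\varphi^4$ (equivalently with $\beta_1 \varphi^4 = 5\phi^4$ under the rescaling $\varphi = \theta \phi$), I would first differentiate the identity $-\varphi''_\omega + \omega \varphi_\omega - c \varphi_\omega^5 = 0$ with respect to $\omega$. Denoting $\dot{\varphi} := \partial_\omega \varphi_\omega$, this yields
\begin{equation*}
-\dot{\varphi}'' + \omega \dot{\varphi} - 5c\,\varphi^4 \dot{\varphi} = -\varphi,
\qquad \text{that is,} \qquad \mathcal{L}_1 \dot{\varphi} = -\varphi.
\end{equation*}
Because $\varphi$ is even and $\mathrm{Ker}(\mathcal{L}_1) = [\varphi']$ consists of odd functions, $\varphi$ is orthogonal to $\mathrm{Ker}(\mathcal{L}_1)$, so the inverse image $\mathcal{L}_1^{-1}\varphi$ is well defined modulo $[\varphi']$ and $-\dot{\varphi}$ is a valid representative.

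Second, I would plug this representative into the inner product. Since $\mathrm{Ker}(\mathcal{L}_1)\perp \varphi$, the quantity $(\mathcal{L}_1^{-1}\varphi, \varphi)_{L^2}$ is independent of the chosen preimage, and therefore
\begin{equation*}
\bigl( \mathcal{L}_1^{-1}\varphi, \varphi\bigr)_{L^2} \;=\; \bigl( -\dot{\varphi}, \varphi\bigr)_{L^2} \;=\; -\frac{1}{2}\frac{d}{d\omega}\int_0^L \varphi_\omega^2 \, dx \;=\; -\frac{1}{2}\frac{d}{d\omega}\|\varphi\|_{L^2}^2.
\end{equation*}
By Lemma \ref{dif-norma}, the derivative $\frac{d}{d\omega}\|\varphi\|_{L^2}^2$ is strictly positive on $\bigl(\tfrac{4\pi^2}{L^2},+\infty\bigr)$, hence $(\mathcal{L}_1^{-1}\varphi,\varphi)_{L^2} < 0$, as required.

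No step here is really an obstacle: the differentiation-in-$\omega$ trick is the standard Vakhitov--Kolokolov identity and the nontrivial work — establishing the strict positivity of $\frac{d}{d\omega}\|\varphi\|_{L^2}^2$ via an algebraic (rather than numerical) argument — has already been carried out in Lemma \ref{dif-norma}, precisely the correction alluded to in Remarks \ref{rem:correction0}--\ref{rem:correction}. The only minor care point is the justification that $\dot{\varphi}$ is a legitimate preimage, which is immediate from the smoothness of the curve $\omega \mapsto \varphi_\omega \in H^2_{\rm per}$ and the parity observation $\varphi \perp \mathrm{Ker}(\mathcal{L}_1)$.
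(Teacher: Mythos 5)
Your proposal is correct and follows essentially the same route as the paper: differentiate the profile equation \eqref{mainEDOquinticsystem} in $\omega$ to get $\mathcal{L}_1(\tfrac{d\varphi}{d\omega}) = -\varphi$, reduce the inner product to $-\tfrac{1}{2}\tfrac{d}{d\omega}\|\varphi\|_{L^2}^2$, and invoke Lemma \ref{dif-norma}. Your added remark on the well-definedness of the preimage modulo $\mathrm{Ker}(\mathcal{L}_1)=[\varphi']$ is a small refinement the paper leaves implicit, but the argument is identical in substance.
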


\begin{proof}
    Recall that $\omega \in \left( \frac{4\pi^2}{L^2}, +\infty\right) \longmapsto \varphi$ is a smooth curve of cnoidal solutions, as established in Proposition \ref{cnoidalcurve}. By differentiating both sides of \eqref{mainEDOquinticsystem} with respect to $\omega$, we find that $\mathcal{L}_1\left(\frac{d\varphi}{d\omega}\right) = -\varphi$, which implies $\mathcal{L}_1^{-1}(\varphi) = -\frac{d}{d\omega}\varphi$. Consequently, it follows that
    \begin{equation}\label{inverseL1dnoidalGeneral}
        (\mathcal{L}_1^{-1}(\varphi), \varphi)_{L^2} = - \int_0^L \varphi \frac{d\varphi}{d\omega} dx = - \frac{1}{2}\frac{d}{d\omega} \int_0^L \varphi^2(x) dx = -\frac{1}{2}\frac{d}{d\omega}\|\varphi\|_{L^2}^2.
    \end{equation}
    The desired result then follows directly from Lemma \ref{dif-norma}, which ensures that $\frac{d}{d\omega}\|\varphi\|_{L^2}^2>0$.
\end{proof}

\begin{lemma}\label{L2-cnoidal}
Let $L>0$ and $\omega \in \left( \frac{4\pi^2}{L^2},+\infty\right)$ be fixed. If $\varphi=\varphi_\omega$ is the periodic wave with cnoidal profile given in Proposition $\ref{cnoidalcurve}$, then
	\begin{equation}
		( \mathcal{L}_2^{-1} (\varphi'),\varphi' )_{L^2} > 0.
	\end{equation}
\end{lemma}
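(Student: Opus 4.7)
The plan is to reduce the inner product to a positivity estimate on the subspace of odd functions, mirroring the spectral reduction carried out in Subsection \ref{spectral-analysis-snoidal}. As a preliminary check, I would verify that $\varphi' \in {\rm Range}(\mathcal{L}_2)$: since $(\varphi',\varphi)_{L^2} = \frac{1}{2}\int_0^L (\varphi^2)'\,dx = 0$ by $L$-periodicity, we have $\varphi' \perp {\rm Ker}(\mathcal{L}_2) = [\varphi]$ (see \eqref{nL2}), so $\mathcal{L}_2^{-1}\varphi'$ is well defined once we fix the representative lying in $[\varphi]^\perp$.

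The key step is to transfer the spectral information in \eqref{nL2} to the odd subspace. Since $\varphi$ is cnoidal and therefore even, the coefficient $-\beta_2 \varphi^4$ of $\mathcal{L}_2$ is even, and thus $\mathcal{L}_2$ commutes with the reflection $x \mapsto -x \pmod L$. By the Floquet theory developed by Neves \cite{Neves}, the eigenfunction associated with the smallest eigenvalue of $\mathcal{L}_2$ is even. Together with ${\rm n}(\mathcal{L}_2) = 1$ and ${\rm Ker}(\mathcal{L}_2) = [\varphi]$ (also even), this forces the restriction $\mathcal{L}_{2,{\rm odd}} := \mathcal{L}_2|_{H^2_{\rm per,odd}}$ to satisfy ${\rm n}(\mathcal{L}_{2,{\rm odd}}) = 0$ and ${\rm z}(\mathcal{L}_{2,{\rm odd}}) = 0$; in particular $\sigma(\mathcal{L}_{2,{\rm odd}}) \subset (0,+\infty)$.

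Applying Lemma \ref{positivity} to $\mathcal{L}_{2,{\rm odd}}$, I would obtain $\delta > 0$ such that $(\mathcal{L}_2 v, v)_{L^2} \geq \delta \|v\|_{L^2}^2$ for every $v \in H^2_{\rm per,odd}$. Since $\varphi' \in H^2_{\rm per,odd}$, the equation $\mathcal{L}_2 h = \varphi'$ admits a unique solution $h \in H^2_{\rm per,odd}$, which is precisely the natural representative of $\mathcal{L}_2^{-1}\varphi'$ in $[\varphi]^\perp$. The conclusion then follows from
$$
(\mathcal{L}_2^{-1}\varphi',\varphi')_{L^2} \;=\; (h, \mathcal{L}_2 h)_{L^2} \;\geq\; \delta\,\|h\|_{L^2}^2 \;>\; 0,
$$
where strict positivity comes from $h \not\equiv 0$, itself a consequence of $\varphi' \not\equiv 0$.

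The main obstacle is securing the parity argument that isolates the odd subspace; once Neves' Floquet result is invoked to place both the negative and the zero eigenvalues of $\mathcal{L}_2$ in the even sector, the remainder is a routine application of Lemma \ref{positivity}. Unlike Lemma \ref{L1-cnoidal}, this approach does not rely on any explicit computation involving the smooth curve $\omega \mapsto \varphi_\omega$, which is consistent with the comment in the introduction about using standard Functional Analysis arguments to establish $\mathcal{J} > 0$.
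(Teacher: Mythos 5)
Your proof is correct, and it rests on the same underlying mechanism as the paper's: parity forces the preimage $\mathcal{L}_2^{-1}\varphi'$ into the positive part of the spectrum of $\mathcal{L}_2$, because the unique negative eigenvalue has an even eigenfunction and the kernel $[\varphi]$ is even while $\varphi'$ is odd. The organization, however, is genuinely different. The paper works in the full space: it introduces the ground state $f_0$, proves $(f_0,g_0)_{L^2}=0$ via self-adjointness, separately proves that $g_0=\mathcal{L}_2^{-1}\varphi'$ is odd by a parity decomposition (hence $g_0\perp\varphi$), and only then invokes the spectral decomposition $L^2_{\rm per}=[f_0]\oplus[\varphi]\oplus P$ to place $g_0$ in the positive cone $P_c$. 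You instead split $L^2_{\rm per}$ into even and odd sectors at the outset, observe that ${\rm n}(\mathcal{L}_{2,\rm odd})={\rm z}(\mathcal{L}_{2,\rm odd})=0$ (which is legitimate since ${\rm n}(\mathcal{L}_2)=1$ means the negative eigenvalue is simple with even eigenfunction, and the kernel generator is even), and then apply Lemma \ref{positivity} to the odd restriction. Your route is shorter and avoids the explicit three-way decomposition from \cite{AnguloBook}; it also dovetails naturally with the odd-subspace analysis of Subsection \ref{spectral-analysis-snoidal}. What the paper's version buys is that it never needs to assert invertibility of $\mathcal{L}_2$ on a distinguished subspace: it works directly with an arbitrary preimage $g_0\in{\rm Ker}(\mathcal{L}_2)^\perp$ and deduces its properties a posteriori. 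Both arguments are complete; the one point you should make explicit is that $\mathcal{L}_2$ preserves parity (the potential $\beta_2\varphi^4$ is even), which is exactly what justifies both the direct-sum splitting of the spectrum and the claim that the odd solution $h$ coincides with the canonical representative of $\mathcal{L}_2^{-1}\varphi'$ in $[\varphi]^\perp$.
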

\begin{proof}
Recall that the operator $\mathcal{L}_2$ is self-adjoint in $L^2_{\text{per}}$, and that its eigenfunctions associated with distinct eigenvalues are orthogonal (see \cite[Theorem 11.21]{AnguloBook} and \cite[Lemma 3.4]{Schmudgen}). Also, we know that ${\rm n}(\mathcal{L}_2)=1$ and ${\rm z}(\mathcal{L}_2)=1$. These facts imply that there exists $f_0 \in {\rm D}(\mathcal{L}_2)$ and $-\lambda_0^{(2)} < 0$ such that $\mathcal{L}_2(f_0) = -\lambda_0^{(2)} f_0$, with ${\rm Ker}(\mathcal{L}_2) = [\varphi]$. On the other hand, since $\varphi' \in [\varphi]^{\perp} = {\rm Ker}(\mathcal{L}_2)^{\perp} = {\rm R}(\mathcal{L}_2)$, there exists\footnote{Since $L^2_{\text{per}} = {\rm Ker}(\mathcal{L}_2) \oplus {\rm Ker}(\mathcal{L}_2)^{\perp}$, there exist $g_2 \in {\rm D}(\mathcal{L}_2)$, $g_0 \in {\rm Ker}(\mathcal{L}_2)^\perp$ and $g_1 \in {\rm Ker}(\mathcal{L}_2)$ such that $\varphi' = \mathcal{L}_2(g_2) = \mathcal{L}_2(g_0) + \mathcal{L}_2(g_1) = \mathcal{L}_2(g_0)$.} $g_0 \in {\rm Ker}(\mathcal{L}_2)^{\perp}$, $g_0 \neq 0$, such that $\mathcal{L}_2(g_0) = \varphi'$, or equivalently $\mathcal{L}_2^{-1}(\varphi') = g_0$. Moreover, $\varphi'$ is an odd function, while $f_0$ is an even function (according to \cite[Theorem 1.1]{MagnusWinkler}, the eigenfunction associated with the first eigenvalue of $\mathcal{L}_2$ is even). Thus,
\[
(f_0, \varphi')_{L^2} = 0.
\]
Now, using the fact that $\mathcal{L}_2$ is self-adjoint, we obtain
\[
(f_0, \varphi')_{L^2} = (f_0, \mathcal{L}_2(g_0))_{L^2} = (\mathcal{L}_2(f_0), g_0)_{L^2}=-\lambda_0^{(2)}(f_0, g_0)_{L^2}.
\]
%Since $\mathcal{L}_2(f_0) = -\lambda_0^{(2)} f_0$, we infer that
%\[
%(f_0, \varphi')_{L^2} = -\lambda_0^{(2)}(f_0, g_0)_{L^2}.
%\]
Since $(f_0, \varphi')_{L^2} = 0$ and $-\lambda_0^{(2)} \neq 0$, it follows that
\[
(f_0, g_0)_{L^2} = 0.
\]
Therefore, we conclude that $f_0$ and $g_0$ are orthogonal in $L^2_{\text{per}}$. In other words, the eigenfunction $f_0$ associated with the first negative eigenvalue of $\mathcal{L}_2$ is orthogonal to $g_0 = \mathcal{L}_2^{-1}(\varphi')$.

Next, we claim that $g_0$ and $\varphi$ are orthogonal in $L^2_{\text{per}}$. To prove this claim, we first show that $g_0$ is an odd function. To do so, considering the orthogonal decomposition of the space $L^2_{\text{per}}$ into subspaces of odd and even functions, we can write $g_0 = g_0^{\text{even}} + g_0^{\text{odd}}$, where $g_0^{\text{even}} \in L^2_{\text{per}}$ is an even function and $g_0^{\text{odd}} \in L^2_{\text{per}}$ is an odd function. Since $\mathcal{L}_2$ is a parity-preserving operator, the function $\mathcal{L}_2(g_0^{\text{even}})$ is even and $\mathcal{L}_2(g_0^{\text{odd}})$ is odd. Moreover, since $\mathcal{L}_2(g_0^{\text{even}}) + \mathcal{L}_2(g_0^{\text{odd}}) = \mathcal{L}_2(g_0) = \varphi'$, with $\varphi'$ an odd function, we obtain that $\mathcal{L}_2(g_0^{\text{even}}) = 0$, which implies $g_0^{\text{even}} = \alpha \varphi$ for some $\alpha \in \mathbb{R}$. However, we note that
\[
(g_0, \varphi)_{L^2} = (g_0^{\text{even}}, \varphi)_{L^2} + (g_0^{\text{odd}}, \varphi)_{L^2} \implies (g_0, \varphi)_{L^2} = (g_0^{\text{even}}, \varphi)_{L^2},
\]
due to $(g_0^{\text{odd}}, \varphi)_{L^2} = 0$. But, in view of $g_0 \in {\rm Ker}(\mathcal{L}_2)^{\perp} = [\varphi]^{\perp}$, it follows that $(g_0, \varphi)_{L^2} = 0$, and
\[
0 = (g_0^{\text{even}}, \varphi)_{L^2} = (\alpha \varphi, \varphi)_{L^2} = \alpha \|\varphi\|_{L^2}^2 \implies \alpha = 0 \quad \text{and} \quad g_0^{\text{even}} = 0.
\]
Thus, $g_0 = g_0^{\text{odd}}$ is an odd function. As a consequence, since $\varphi$ is an even function, we conclude that $g_0$ and $\varphi$ are orthogonal, as claimed.

Now, according to \cite[page 96]{AnguloBook}, the following orthogonal decomposition of $L^2_{\text{per}}$ holds:
\begin{equation}\label{L2decomposition}
L^2_{\text{per}} = [f_0] \oplus [\varphi] \oplus P,
\end{equation}
where $P \subset L^2_{\text{per}}$ is such that $P_c = P \cap {\rm D}(\mathcal{L}_2)$ is called the positive subspace of $\mathcal{L}_2$, that is, there exists $\varepsilon_0 > 0$ such that
\[
(\mathcal{L}_2(f), f)_{L^2} \geq \varepsilon_0 \|f\|_{L^2}^2, \quad \text{for all } f \in P_c.
\]
Since $g_0 \perp f_0$ and $g_0 \perp \varphi$, we conclude from the decomposition \eqref{L2decomposition} that $g_0 \in P_c$. Consequently, there exists $\varepsilon > 0$ such that
\[
(\mathcal{L}_2^{-1}(\varphi'), \varphi')_{L^2} = (\mathcal{L}_2(g_0), g_0)_{L^2} \geq \varepsilon \|g_0\|_{L^2}^2 > 0.
\]
This completes the proof of the lemma.
\end{proof}

\begin{remark}
We can also perform a numerical approach as is \cite[Section 3]{NataliCardosoAmaral} (see also \cite{BittencourtLoreno2022} and \cite{NataliMoraesLorenoPastor}) to conclude that $(\mathcal{L}_2^{-1}(\varphi'), \varphi')_{L^2}>0$. 
\end{remark}

Having established the previous lemmas, we are now positioned to present the main results of this section: the spectral stability of the multiple periodic solutions $(u,v) = (\varphi, B\varphi)$ in the space $\mathbb{H}^1_{\text{per}}$. To this end, since the spectral analysis of the operator $\mathcal{L}$ depends on $B$ given in \eqref{Bvalue}, we shall examine the spectral stability scenarios depending of $B$ separately:

\subsection*{Case 1: $B = B_+$} 
In this case, we find that ${\rm z}(\mathcal{L}) = 2$, where ${\rm Ker}(\mathcal{L}) = [\Theta_1, \Theta_2]$ is given by
\begin{equation*}
    \Theta_1 = (\varphi', B\varphi', 0, 0) \quad \text{and} \quad \Theta_2 = (0, 0, \varphi, B\varphi).
\end{equation*}
By employing \eqref{VentriesExplicity}, the matrix $J$ defined in \eqref{Jmatrix}, and the orthogonal matrix $U$ (computed via Maple for the case $B = B_+$), it follows that
$$
U^{-1}J \Theta_1 = \left( 0,0, \frac{1}{a_3}\;2\gamma^2 \varphi',0  \right)  \quad \text{and} \quad  U^{-1}J \Theta_2 =\left( - \frac{1}{a_1 }\frac{1}{\kappa^2}\; \varphi,0,0,0  \right).
$$
%\begin{equation*}
%    U^{-1} J \Theta_1 = \left(0, 0,   2\gamma^2\varphi', 0\right) \quad \text{and} \quad U^{-1} J \Theta_2 =       \left( -\frac{1}%{\kappa^2} \varphi, 0, 0, 0\right).
%\end{equation*}
where $a_1,a_3>0$ are appropriate constants arising from the normalization of the eigenvectors that form the columns of $U$.

Consequently, the matrix $V$ is expressed as
\begin{equation*}
    V = \begin{bmatrix}
        \left( \frac{1}{a_3}\right)^2  4 \gamma^4 \left(  \mathcal{L}_2^{-1}(\varphi'), \varphi'\right)_{L^2} & 0 \\
    0 &    \left( \frac{1}{a_1}\right)^2 \frac{1}{\kappa^4} \left(\mathcal{L}_1^{-1} (\varphi), \varphi\right)_{L^2}
    \end{bmatrix}.
\end{equation*}
 
Applying Lemmas \ref{L1-cnoidal} and \ref{L2-cnoidal}, we establish that ${\rm n}(V) = 1$. Since ${\rm n}(\mathcal{L}) = 5$, it follows that $\mathcal{K}_{\rm Ham} = 4$. Given that $\mathcal{K}_{\rm Ham}$ is a non-zero even integer, no conclusion can be drawn regarding spectral stability in this case.

\subsection*{Case 2: $B = B_-$} 
In this case, the kernel of $\mathcal{L}$ remains the same as in the previous case. However, following analogous computations with the matrices $J$ and $U$ specific to this setting, we obtain
$$
U^{-1} J \Theta_1=\left( 0,0, -\frac{1}{b_3}\;\frac {1}{ \left( \gamma\,\sqrt {{\gamma}^{2}-4\,{\kappa}^{2}}+2
\,{\kappa}^{2}-{\gamma}^{2} \right) ^{2}}\,\varphi',0  \right) \quad \text{and} \quad U^{-1}J \Theta_2 =\left( - \frac{1}{b_1 }\frac{1}{\kappa^2}\; \varphi,0,0,0  \right)
$$
%\begin{equation*}
 %   U^{-1} J \Theta_1 = \left(0, 0, \frac{-1}{(\gamma \sqrt{\gamma^2 - 4\kappa^2} + 2\kappa^2 - \gamma^2)^2} \varphi', 0\right) \quad \text{and} \quad U^{-1} J \Theta_2 = \left( \frac{-1}{\kappa^2} \varphi, 0, 0 ,0 \right).
%\end{equation*}
where $b_1,b_3>0$ are suitable constants that arise from the normalization of the eigenvectors that form the columns of $U$.

By proceeding as in \textbf{Case 1}, we find that ${\rm n}(V) = 1$. Since ${\rm n}(\mathcal{L}) = 4$, it follows that $\mathcal{K}_{\rm Ham} = 4$. Consequently, as $\mathcal{K}_{\rm Ham}$ is a non-zero even integer, this case also yields no conclusive result regarding spectral stability.

\subsection*{Case 3: $B = 1$} 

In this case, we separate our analysis into four partitions of $\gamma$, following the same procedure as in the previous spectral analysis.

\subsubsection*{Case 3.1: $\gamma = 0$} 
For $\gamma = 0$, we find that ${\rm z}(\mathcal{L}) = 4$, with the kernel of $\mathcal{L}$ given by
\begin{equation*}
    {\rm Ker}(\mathcal{L}) = [(\varphi',0,0,0), (0,\varphi',0,0), (0,0,\varphi,0), (0,0,0,\varphi)].
\end{equation*}
Furthermore, the use of the matrix $U$ is unnecessary here since $\mathcal{L}$ assumes the block diagonal form
\begin{equation*}
    \mathcal{L} = \begin{bmatrix}

        \mathcal{L}_1 & 0 & 0 & 0 \\

        0 & \mathcal{L}_1 & 0 & 0 \\

        0 & 0 & \mathcal{L}_2 & 0 \\

        0 & 0 & 0 & \mathcal{L}_2

    \end{bmatrix}.
\end{equation*}
By applying the matrix $J$ as defined in \eqref{Jmatrix}, we obtain
\begin{equation*}
    V = \begin{bmatrix}
        \left(\mathcal{L}_2^{-1} (\varphi'), \varphi'\right)_{L^2} & 0 & 0 & 0 \\
        0 & \left(\mathcal{L}_2^{-1} (\varphi'), \varphi'\right)_{L^2} & 0 & 0 \\
        0 & 0 & \left(\mathcal{L}_1^{-1} (\varphi), \varphi\right)_{L^2} & 0 \\
        0 & 0 & 0 & \left(\mathcal{L}_1^{-1} (\varphi), \varphi\right)_{L^2}
    \end{bmatrix}.
\end{equation*}
From Lemmas \ref{L1-cnoidal} and \ref{L2-cnoidal}, it follows that ${\rm n}(V) = 2$. Given that ${\rm n}(\mathcal{L}) = 6$, we conclude that $\mathcal{K}_{\rm Ham} = 4$.

\subsubsection*{Case 3.2: $\gamma \in (0,2\kappa)$} 
In this case, we have ${\rm z}(\mathcal{L}) = 2$ with $\Theta_1 = (\varphi', B \varphi', 0,0)$ and $\Theta_2 = (0,0, \varphi, B \varphi)$ for all $\gamma \in \left( 0, 2\kappa \right)$. By employing the expressions for $J$ and $U$, we obtain
\begin{equation*}
    U^{-1} J \Theta_1 = (0,0, -\sqrt{2} \varphi', 0) \quad \text{and} \quad U^{-1} J \Theta_2 = (\sqrt{2} \varphi, 0, 0, 0),
\end{equation*}
which implies
\begin{equation*}
    V = \begin{bmatrix}
        2 \left(\mathcal{L}_2^{-1} (\varphi'), \varphi'\right)_{L^2} & 0 \\
        0 & 2 \left(\mathcal{L}_1^{-1} (\varphi), \varphi\right)_{L^2}
    \end{bmatrix}.
\end{equation*}
Invoking Lemmas \ref{L1-cnoidal} and \ref{L2-cnoidal}, we find that ${\rm n}(V) = 1$. Since ${\rm n}(\mathcal{L}) = 5$ in this range, we again obtain $\mathcal{K}_{\rm Ham} = 4$.

\subsubsection*{Case 3.3: $\gamma = 2\kappa$} 

For $\gamma = 2 \kappa$, we have ${\rm z}(\mathcal{L}) = 3$, where the kernel is given by
\begin{equation*}
    {\rm Ker}(\mathcal{L}) = [(\varphi',B\varphi',0,0), (0,0,\varphi,B\varphi), (-B\varphi,\varphi,0,0)].
\end{equation*}
From the explicit expressions of $J$ and $U$, it follows that $U^{-1} J \Theta_1 = (0,0, -\sqrt{2} \varphi', 0)$, $U^{-1} J \Theta_2 = (\sqrt{2} \varphi, 0, 0, 0)$, and $U^{-1} J \Theta_3 = (0,0,0,\sqrt{2}\varphi)$. Consequently, the matrix $V$ is given by
\begin{equation*}
    V = \begin{bmatrix}
        2 \left(\mathcal{L}_2^{-1} (\varphi'), \varphi'\right)_{L^2} & 0 & 0 \\
        0 & 2 \left(\mathcal{L}_1^{-1} (\varphi), \varphi\right)_{L^2} & 0 \\
        0 & 0 & 2 \left(\mathcal{L}_4^{-1} (\varphi), \varphi\right)_{L^2}
    \end{bmatrix}.
\end{equation*}
Since ${\rm n}(\mathcal{L}_4) = {\rm z}(\mathcal{L}_4) = 0$, Lemma \ref{positivity} implies that $\left(\mathcal{L}_4^{-1} (\varphi), \varphi\right)_{L^2} > 0$. Thus, by invoking Lemmas \ref{L1-cnoidal} and \ref{L2-cnoidal}, we find that ${\rm n}(V) = 1$. Given that ${\rm n}(\mathcal{L}) = 4$, we obtain $\mathcal{K}_{\rm Ham} = 3$. Therefore, since $\mathcal{K}_{\rm Ham}$ is a non-zero odd integer, we conclude that the multiple periodic wave $(\varphi, B \varphi)$ is spectrally unstable in $\mathbb{H}^1_{\text{per}}$ for $\gamma = 2\kappa$.

\subsubsection*{Case 3.4: $\gamma \in (2\kappa, +\infty)$}
In this case, it can be shown that ${\rm n}(\mathcal{L}) = 3$ and ${\rm z}(\mathcal{L}) = 2$ for $\gamma \in (5\kappa, +\infty)$. This leads to the same scenario encountered in Case 3.2, where ${\rm n}(V) = 1$. Consequently, we find that $\mathcal{K}_{\rm Ham} = 2$. Since $\mathcal{K}_{\rm Ham}$ is a non-zero even integer, no definitive conclusion can be drawn regarding spectral stability in this case.

\subsection*{Case 4: $B = -1$} 
Here, the spectral analysis has been developed only for $\gamma = 0$. Under this condition, we obtain the same result as in Case 3.1, where ${\rm n}(\mathcal{L}) = 6$ and ${\rm n}(V) = 2$. It follows that $\mathcal{K}_{\rm Ham} = 4$. Since $\mathcal{K}_{\rm Ham}$ is a non-zero even integer, no conclusion can be drawn regarding spectral stability in this case.

Summarizing the analysis for each case of $B$, a spectral instability result for the multiple periodic waves $(\varphi, B\varphi, 0, 0)$ in $\mathbb{H}^1_{\text{per}}$ is established only for $B=1$ and $\gamma = 2\kappa$. More precisely, we have the proof of Theorem \ref{teorema-cnoidal}.

\begin{remark}\label{rmk1}
    All other values of $B$ in  \eqref{Bvalue} and the remaining ranges of $\gamma$ for $B = 1$ remain open problems in the literature. This is due to the high number of negative eigenvalues of $\mathcal{L}$ and the fact that the Krein Hamiltonian Index $\mathcal{K}_{\rm Ham}$ yields inconclusive even values in those cases.
\end{remark}

\vspace{0.5cm}
\section{Spectral Stability on subspace of odd functions}\label{spectral-stability-odd-space}
Motivated by Remark \ref{rmk1}, we shall investigate the spectral stability of $(\psi, B\psi, 0, 0)$ in $\mathbb{H}^1_{\text{per,odd}}$, where $\psi$ is an odd sign-changing solution with the snoidal profile given in \eqref{snoidal-solution}. Recalling that $\psi(x) = \varphi\left( x + \tfrac{L}{4} \right)$ for all $x \in \mathbb{R}$, we can establish certain properties regarding the operators $\mathcal{L}_i$, $i=1,2,3,4$, which are enunciated in the following lemmas:

\begin{lemma}\label{L1-snoidal}
Let $L>0$ and $\omega \in \left( \frac{4\pi^2}{L^2}, +\infty\right)$ be fixed. If $\psi=\psi_\omega$ is the periodic wave with the snoidal profile given in Proposition \ref{snoidalcurve}, then 
\begin{equation}
    \left( \mathcal{L}_{1,\rm odd}^{-1}(\psi), \psi \right)_{L^2} < 0.
\end{equation}
\end{lemma}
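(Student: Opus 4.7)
The plan is to mirror the strategy used in the proof of Lemma \ref{L1-cnoidal}, with the restriction to odd functions actually simplifying matters because $\text{Ker}(\mathcal{L}_{1,\text{odd}}) = \{0\}$ by \eqref{nL1-odd}, so $\mathcal{L}_{1,\text{odd}}$ is genuinely invertible on $H^2_{\text{per,odd}}$ and no further orthogonality considerations are needed.

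First, I would invoke Proposition \ref{snoidalcurve} to differentiate both sides of \eqref{mainEDOquinticsystem} (applied to the snoidal solution $\psi = \psi_\omega$) with respect to the wave frequency $\omega$. This yields the identity
\begin{equation*}
\mathcal{L}_1\!\left(\frac{d\psi}{d\omega}\right) = -\psi,
\end{equation*}
where $\mathcal{L}_1 = -\partial_x^2 + \omega - 5(\kappa + \gamma B^3)\psi^4$ is precisely the operator appearing in Section \ref{spectral-analysis-snoidal}. Since $\psi$ is odd and the snoidal family in \eqref{snoidal-solution} is built from $\sn$ with $\tilde{a}$ and $\tilde{q}$ depending smoothly on $k$ and $L$ only, the derivative $\frac{d\psi}{d\omega}$ preserves the odd symmetry, so it lies in $H^2_{\text{per,odd}}$. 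Therefore $\mathcal{L}_{1,\text{odd}}^{-1}(\psi) = -\frac{d\psi}{d\omega}$.

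Next, I would compute the desired pairing exactly as in \eqref{inverseL1dnoidalGeneral}, namely
\begin{equation*}
(\mathcal{L}_{1,\text{odd}}^{-1}(\psi), \psi)_{L^2} \;=\; -\int_0^L \psi\,\frac{d\psi}{d\omega}\,dx \;=\; -\frac{1}{2}\frac{d}{d\omega}\|\psi\|_{L^2}^2.
\end{equation*}
Finally, since $\psi(x) = \varphi\!\left(x + \tfrac{L}{4}\right)$ and both functions are $L$-periodic, a translation-invariance argument in the integral gives $\|\psi\|_{L^2}^2 = \|\varphi\|_{L^2}^2$ for every $\omega$ in the admissible range. Lemma \ref{dif-norma} then yields $\frac{d}{d\omega}\|\psi\|_{L^2}^2 = \frac{d}{d\omega}\|\varphi\|_{L^2}^2 > 0$, which delivers the strict negativity of $(\mathcal{L}_{1,\text{odd}}^{-1}(\psi), \psi)_{L^2}$.

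There is no serious obstacle in this argument; the only subtle points are (i) verifying that $\frac{d\psi}{d\omega}$ remains odd, which follows directly from the explicit parametrization in \eqref{snoidal-solution} together with the smoothness asserted in Proposition \ref{snoidalcurve}, and (ii) transferring the monotonicity of $\|\varphi\|_{L^2}^2$ from Lemma \ref{dif-norma} to $\|\psi\|_{L^2}^2$, which is a one-line change of variables thanks to the translation relation between $\psi$ and $\varphi$.
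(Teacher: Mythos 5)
Your proposal is correct and follows essentially the same route as the paper's proof: differentiate the profile equation in $\omega$ to get $\mathcal{L}_1(d\psi/d\omega)=-\psi$, reduce the pairing to $-\tfrac{1}{2}\tfrac{d}{d\omega}\|\psi\|_{L^2}^2$, and use the translation identity $\|\psi\|_{L^2}^2=\|\varphi\|_{L^2}^2$ together with Lemma \ref{dif-norma}. Your added remarks on the oddness of $d\psi/d\omega$ and the invertibility of $\mathcal{L}_{1,\mathrm{odd}}$ are correct details that the paper leaves implicit.
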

\begin{proof}
    Analogous to the proof of Lemma \ref{L1-cnoidal}, we observe that $\mathcal{L}_1\left( \frac{d \psi}{d \omega} \right) = - \psi$, which implies $\mathcal{L}_1^{-1}(\psi) = - \frac{d\psi}{d\omega}$. Consequently, using the fact that $\mathcal{L}_{1,\rm odd}$ is the restriction of $\mathcal{L}_1$ on the subspace of odd functions, we have that
    \begin{equation*}
        \left( \mathcal{L}_{1,\rm odd}^{-1}(\psi), \psi \right)_{L^2} = \left( \mathcal{L}_1^{-1}(\psi), \psi \right)_{L^2} = \left( -\frac{d\psi}{d\omega}, \psi \right)_{L^2} = - \frac{1}{2} \frac{d}{d\omega} \int_0^L \psi^2(x) dx.
    \end{equation*}

    Since $\psi$ is a translation of $\varphi$ by $\frac{L}{4}$, and by virtue of the $L$-periodicity of $\varphi$, it follows that
    \begin{equation*}
    \int_0^L \psi^2(x)\; dx = \int_0^L \varphi^2\left( x + \frac{L}{4} \right) \; dx = \int_{\frac{L}{4}}^{\frac{5L}{4}} \varphi^2(y)\; dy = \int_{0}^L \varphi^2(x) \; dx.   
    \end{equation*}
    Thus, we obtain $\left( \mathcal{L}_{\rm 1,{\rm odd}}^{-1}(\psi), \psi \right)_{L^2} = -\frac{1}{2} \frac{d}{d\omega} \|\varphi\|_{L^2}^2$. The result then follows immediately from Lemma \ref{dif-norma}.
\end{proof}

\begin{lemma}\label{L2-snoidal}
Let $L>0$ and $\omega \in \left( \frac{4\pi^2}{L^2}, +\infty\right)$ be fixed. If $\psi=\psi_\omega$ is the periodic wave with the snoidal profile given in Proposition \ref{snoidalcurve}, then 
\begin{equation}
    \left( \mathcal{L}_{\rm 2,{\rm odd}}^{-1}(\psi'), \psi' \right)_{L^2} > 0.
\end{equation}
\end{lemma}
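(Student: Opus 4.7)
The plan is to reduce the desired positivity to its cnoidal analogue already handled in Lemma \ref{L2-cnoidal}, by exploiting the fact that $\psi$ is the $L/4$-translate of $\varphi$. Let $(\tau_h f)(x) := f(x+h)$ denote the translation operator; by a change of variables together with $L$-periodicity, $\tau_h$ acts as an isometry of $L^2_{\rm per}$. Since $\psi = \tau_{L/4}\varphi$ implies $\psi' = \tau_{L/4}\varphi'$ and $\psi^4 = \tau_{L/4}(\varphi^4)$, a direct computation will show that the two Hill operators are conjugate:
\begin{equation*}
\mathcal{L}_2^{\psi} = \tau_{L/4} \circ \mathcal{L}_2^{\varphi} \circ \tau_{-L/4},
\end{equation*}
where $\mathcal{L}_2^{\varphi} = -\partial_x^2 + \omega - \beta_2\varphi^4$ and $\mathcal{L}_2^{\psi} = -\partial_x^2 + \omega - \beta_2\psi^4$. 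In particular, their spectra coincide and their (one-dimensional) kernels are related by $\tau_{L/4}$.

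Next, I would verify that both inversions are legitimate via parity: $\varphi$ is even and $\varphi'$ is odd, so $\varphi' \perp \varphi$ and $\varphi' \in {\rm R}(\mathcal{L}_2^{\varphi})$; analogously, $\psi$ is odd and $\psi'$ is even, so $\psi' \perp \psi$ and $\psi' \in {\rm R}(\mathcal{L}_2^{\psi})$. Setting $g_0 := (\mathcal{L}_2^{\varphi})^{-1}\varphi' \in {\rm Ker}(\mathcal{L}_2^{\varphi})^{\perp}$, applying $\tau_{L/4}$ to $\mathcal{L}_2^{\varphi}g_0 = \varphi'$ and invoking the conjugation identity gives $(\mathcal{L}_2^{\psi})^{-1}\psi' = \tau_{L/4}g_0$. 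Combining this with the isometry property of $\tau_{L/4}$ yields
\begin{equation*}
\big(\mathcal{L}_{2,\rm odd}^{-1}(\psi'), \psi'\big)_{L^2} = \big(\tau_{L/4} g_0, \tau_{L/4}\varphi'\big)_{L^2} = (g_0, \varphi')_{L^2} = \big((\mathcal{L}_2^{\varphi})^{-1}(\varphi'), \varphi'\big)_{L^2},
\end{equation*}
which is strictly positive by Lemma \ref{L2-cnoidal}, giving the desired conclusion.

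The main obstacle this approach sidesteps is a direct imitation of the Functional Analysis proof of Lemma \ref{L2-cnoidal} in the snoidal setting. There, orthogonality of $\varphi'$ against the first (even) eigenfunction $f_0$ of $\mathcal{L}_2^{\varphi}$ followed immediately by parity, since $\varphi'$ was odd; this was the crucial step in placing $g_0$ inside the positive subspace of $\mathcal{L}_2^{\varphi}$. For the snoidal problem, however, $\psi'$ is even and, since $\psi^4$ is even, Magnus–Winkler theory forces the first eigenfunction of $\mathcal{L}_2^{\psi}$ to be even as well, so parity alone provides no orthogonality and a direct positive-subspace decomposition would be considerably more delicate. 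Transporting the problem back to the cnoidal setting via the translation identity encodes all the necessary spectral information and bypasses this obstruction entirely.
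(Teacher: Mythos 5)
Your proof is correct, but it takes a genuinely different route from the paper's. The paper disposes of this lemma in one line: it cites the facts ${\rm n}(\mathcal{L}_{2,\rm odd})=0$ and ${\rm z}(\mathcal{L}_{2,\rm odd})=1$ with ${\rm Ker}(\mathcal{L}_{2,\rm odd})=[\psi]$, and then invokes Lemma \ref{positivity} (coercivity of a nonnegative self-adjoint operator on the orthogonal complement of its kernel) to conclude positivity of the quadratic form at $\mathcal{L}_{2}^{-1}(\psi')$. You instead conjugate by the translation $\tau_{L/4}$, verify $\mathcal{L}_2^{\psi}=\tau_{L/4}\,\mathcal{L}_2^{\varphi}\,\tau_{-L/4}$, and transport the entire computation back to Lemma \ref{L2-cnoidal} via $\left(\mathcal{L}_2^{-1}(\psi'),\psi'\right)_{L^2}=\left(\mathcal{L}_2^{-1}(\varphi'),\varphi'\right)_{L^2}$. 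Both arguments are short, but yours is arguably the more careful one: as you correctly observe, $\psi'$ is an \emph{even} function (being the derivative of the odd function $\psi$), so it does not lie in $L^2_{\rm per,odd}$ at all, and the ground state of $\mathcal{L}_2^{\psi}$ is also even; consequently the spectral data of the odd restriction does not directly control a quadratic form evaluated at the even function $\mathcal{L}_2^{-1}(\psi')$, which lives in the even subspace where $\mathcal{L}_2^{\psi}$ does possess a negative direction. The paper's one-line appeal to Lemma \ref{positivity} glosses over exactly the parity obstruction you identify at the end of your proposal, whereas your translation identity inherits the full functional-analytic proof of Lemma \ref{L2-cnoidal} (where the parity argument genuinely works, since $\varphi'$ is odd while the ground state of $\mathcal{L}_2^{\varphi}$ is even) and settles the sign unambiguously. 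The only cosmetic point is that you should state explicitly that you are reading $\mathcal{L}_{2,\rm odd}^{-1}(\psi')$ as the full-space inverse $(\mathcal{L}_2^{\psi})^{-1}\psi'$, which is the only interpretation under which the quantity in the lemma is well defined.
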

\begin{proof}
    The result follows directly from the fact that ${\rm n}(\mathcal{L}_{\rm 2,odd}) = 0$ and ${\rm z}(\mathcal{L}_{\rm 2,odd}) = 1$ in $\mathbb{H}^2_{\text{per,odd}}$, together with the application of Lemma \ref{positivity}.
\end{proof}

Taking into account the reduction of the number of negative eigenvalues and the dimension of the kernel of $\mathcal{L}$ when restricted to $\mathbb{H}^2_{\text{per,odd}}$, we are now positioned to establish spectral stability results for the multiple periodic waves $(\psi, B\psi, 0, 0)$ through the various scenarios for $B$. Following our previous approach, we shall partition this analysis according to the cases for $B$ defined in \eqref{Bvalue}.

\subsection*{Case 1: $B = B_+$} In this scenario, we established that ${\rm n}(\mathcal{L}_{\text{odd}}) = 2$ and ${\rm Ker}(\mathcal{L}_{\text{odd}}) = [(0,0, \psi, B\psi)]$. It follows that $V$ is a one-dimensional matrix defined by 
\[
V = \frac{1}{\kappa^4} \left( \mathcal{L}_{1,\text{odd}}^{-1} (\psi), \psi \right)_{L^2},
\]
given that $U^{-1} J \Theta_1 = \left( -\frac{1}{\kappa^2} \psi, 0,0,0 \right)$. According to Lemma \ref{L1-snoidal}, we have ${\rm n}(V) = 1$. Consequently, we find that $\mathcal{K}_{\text{Ham}} = 1$, which implies that the multiple periodic wave $(\psi, B\psi, 0, 0)$ is spectrally unstable for the case $B = B_+$.

\subsection*{Case 2: $B = B_-$} Since ${\rm n}(\mathcal{L}_{\text{odd}}) = 2$ and ${\rm Ker}(\mathcal{L}_{\text{odd}}) = [(0,0, \psi, B\psi)]$, this case mirrors the previous one. Thus, we conclude that $(\psi, B\psi, 0, 0)$ is spectrally unstable when $B = B_-$.

\subsection*{Case 3: $B = 1$} Here, we examine the four distinct scenarios corresponding to the parameter $\gamma$.

\subsubsection*{Case 3.1: $\gamma = 0$} As previously established, we have ${\rm Ker}(\mathcal{L}_{\rm odd}) = [(0,0,\psi,0), (0,0,0,\psi)]$. Consequently, the matrix $V$ is defined as
\begin{equation*}
    V = \begin{bmatrix}
        \left( \mathcal{L}_{\rm 1,odd}^{-1} (\psi), \psi \right)_{L^2} & 0 \\
        0 & \left( \mathcal{L}_{\rm 1,odd}^{-1} (\psi), \psi \right)_{L^2}
    \end{bmatrix}.
\end{equation*}
In view of Lemma \ref{L1-snoidal}, it follows that ${\rm n}(V) = 2$. Given that ${\rm n}(\mathcal{L}_{\rm odd}) = 2$ for $B=1$ and $\gamma = 0$, we obtain $\mathcal{K}_{\text{Ham}} = 0$. This result implies that the multiple periodic wave $(\psi, B\psi, 0, 0)$ is spectrally stable for the specific case where $B = 1$ and $\gamma = 0$.

\subsubsection*{Case 3.2: $\gamma \in (0,2\kappa)$} Now, we have that ${\rm Ker}(\mathcal{L}_{\rm odd}) = [(0,0,\psi, B\psi)]$, implying that $V = 2 \left( \mathcal{L}_{\rm 1,odd}^{-1} (\psi), \psi \right)_{L^2}$ and, consequently, using the Lemma \ref{L1-snoidal}, ${\rm n}(V) = 1$. On the other hand, by the fact that ${\rm n}(\mathcal{L}_{\rm odd}) = 2$, we obtain that $\mathcal{K}_{\text{Ham}} = 1$ implying that the multiple periodic wave $(\psi, B\psi, 0, 0)$ is spectrally unstable in this case.

\subsubsection*{Case 3.3: $\gamma = 2\kappa$} As estabilished in the section of spectral analysis, we have that ${\rm Ker}(\mathcal{L}_{\rm odd}) = [(0,0,\psi,B\psi), (-B\psi,\psi,0,0)]$. Thus, using the expressions of matrices $J$ and $U$, we have that the matrix $V$ is defined as
\begin{equation*}
    V = \begin{bmatrix}
        2 \left( \mathcal{L}_{\rm 1,odd}^{-1} (\psi), \psi \right)_{L^2} & 0 \\
        0 & 2 \left( \mathcal{L}_{\rm 4,odd}^{-1} (\psi), \psi \right)_{L^2}
    \end{bmatrix}.
\end{equation*}
From the spectral analysis of $\mathcal{L}_{\rm 4,odd}$ and Lemma \ref{positivity}, we have that $\left( \mathcal{L}_{\rm 4,odd}^{-1} (\psi), \psi \right)_{L^2} > 0$. Thus, in view of Lemma \ref{L1-snoidal}, it follows that ${\rm n}(V) = 1$. Given that ${\rm n}(\mathcal{L}_{\rm odd}) = 1$ for $B=1$ and $\gamma = 2\kappa$, we obtain $\mathcal{K}_{\text{Ham}} = 0$. This result implies that the multiple periodic wave $(\psi, B\psi, 0, 0)$ is spectrally stable for the particular case where $B = 1$ and $\gamma = 2\kappa$.

\subsubsection*{Case 3.4: $\gamma \in (2\kappa, +\infty)$} Since ${\rm Ker}(\mathcal{L}_{\text{odd}}) = [(0,0, \psi, B\psi)]$, this case mirrors the Case 3.2 and we have that ${\rm n}(V) = 1$. However, in this case, we have that ${\rm n}(\mathcal{L}_{\rm odd}) = 1$ implying that $\mathcal{K}_{\text{Ham}} = 0$. Therefore, we conclude that the multiple periodic wave $(\psi, B\psi, 0, 0)$ is spectrally stable considering $B = 1$ and $\gamma \in (2\kappa, +\infty)$.

\subsection*{Case 4: $B=-1$} In this scenario, spectral analysis results are available only for the case $\gamma = 0$. Furthermore, this situation is entirely analogous to Case 3.1, where it was established that ${\rm n}(V) = 2$. Since ${\rm n}(\mathcal{L}_{\rm odd}) = 2$, we obtain $\mathcal{K}_{\text{Ham}} = 0$. Consequently, the multiple periodic wave $(\psi, B\psi, 0, 0)$ is spectrally stable for $B = -1$ and $\gamma = 0$.

Summarizing the analysis for each case of $B$, a spectral stability/instability result for the multiple periodic waves $(\varphi, B\varphi, 0, 0)$ in $\mathbb{H}^1_{\text{per,odd}}$ is established. More precisely, we have the proof of Theorem \ref{teorema-snoidal}.

\section{Scalar NLS Equation: Orbital Stability on subspace of odd functions}\label{section-orbital}

First, let ${\rm d}: \left( \tfrac{4\pi^2}{L^2}, +\infty \right) \to \mathbb{R}$ be the action functional defined by ${\rm d}(\omega) = E(\varphi,0) + \omega F(\varphi,0)$, where $E$ and $F$ are given in \eqref{EnergyFunctional} and \eqref{MassFunctional}, respectively, obtained by setting $v = 0$ in \eqref{NLS-system}. Thus, we have that 
$$
{\rm d}''(\omega) = \frac{1}{2} \frac{d}{d\omega} \|\varphi\|_{L^2}^2 \text{ for all } \omega \in \left( \tfrac{4\pi^2}{L^2}, +\infty \right).
$$

In \cite{BittencourtLoreno2022}, the authors initially proposed the existence of a frequency threshold $\omega^*>0$ such that ${\rm d}''(\omega) < 0$ for all $\omega \in (\omega^*,\infty)$.  By assuming $\omega \in (\omega^*,\infty)$ and by considering ${\rm p}({\rm d}'')$ as the number of positive eigenvalues associated to the second derivative of the funcion ${\rm d}$ we would be able to conclude that  the difference ${\rm n}(\mathcal{L}_{\text{even}}) - {\rm p}({\rm d}'')$ is an odd integer number (actually $3$), which would imply the orbital instability of the standing wave $\Phi = (\varphi,0)$ in the subspace $H^1_{\text{per,even}}$ of $H^1_{\text{per}}$  constituted by even functions. Here $\mathcal{L}_{\text{even}}$ denote the linearized operator $\mathcal{L}$ restricted to the even periodic space.

However, based on the corrections detailed in Lemma \ref{dif-norma} and  Remark  \ref{rem:correction}, we have identified a technical oversight in \cite{BittencourtLoreno2022} regarding the sign of ${\rm d}''(\omega)$. More precisely, our updated analysis confirms that ${\rm d}''(\omega) > 0$ for all $\omega \in \left( \frac{4\pi^2}{L^2}, +\infty \right)$. In view of the classical stability theory applied to this corrected scenario, we conclude that ${\rm n}(\mathcal{L}_{\text{even}}) - {\rm p}({\rm d}'') = 2$, because  ${\rm n}(\mathcal{L}_{\text{even}}) = 3$ and ${\rm p}({\rm d}'') = 1$. Since, in whole space or restricted to the even subspace, this difference is an even integer, the orbital stability or instability of $(\varphi,0)$ remains inconclusive in $H^1_{\text{per}}$ or its even subspace.

Following the arguments developed in the present work, a definitive conclusion can be reached by replacing the analysis to the subspace of odd functions. In this setting, the linearized operator $\mathcal{L}_{\text{odd}}$ satisfies ${\rm n}(\mathcal{L}_{\text{odd}}) = 1$. Combined with the fact that ${\rm d}''(\omega) > 0$ for all $\omega > \frac{4\pi^2}{L^2}$, the stability criteria established by Grillakis, Shatah, and Strauss in \cite{grillakis1} are fully satisfied. Consequently, we conclude that the periodic standing wave with a snoidal profile $\Psi = (\psi, 0)$ is orbitally stable in the space $H^1_{\text{per,odd}}$.

\vspace{0.5cm}

\section*{Concluding Remarks}

In this work, we addressed the spectral stability of periodic waves $(u, v) = (\varphi, B\varphi)$, where $\varphi$ has cnoidal profile as given in \eqref{cnoidal-solution}. Throughout our analysis, we observed that the linearized operator $\mathcal{L}$ -- associated with the Hamiltonian system $U_t = J\mathcal{L}U$ -- possesses a high number of negative eigenvalues. This spectral setting typically leads to non-zero even values for the Hamiltonian Index $\mathcal{K}_{\text{Ham}}$, which significantly complicates the stability analysis in the full energy space.

To overcome these difficulties, we focused our study on the subspace of odd periodic functions $\mathbb{H}^1_{\text{per,odd}}$ by considering the snoidal profile $\psi(x) = \varphi(x + L/4)$, for all $x \in \mathbb{R}$. By restricting the problem to this subspace, we were able to establish precise spectral stability and instability results. It is worth noting that, in cases where spectral instability is found within the subspace of odd functions, such instability is expected to persist in the full periodic space $\mathbb{H}^1_{\text{per}}$.

A particularly interesting phenomenon occurs in the case $B = 1$ and $\gamma = 2\kappa$. Our results demonstrate that the snoidal wave is spectrally stable within $\mathbb{H}^1_{\text{per,odd}}$. However, we also prove that it is unstable in the full periodic space $\mathbb{H}^1_{\text{per}}$. This highlights that spectral stability in the subspace of odd functions does not, in general, allow for an extension to the entire space.

\vspace{0.5cm}

\begin{comment}
    \section*{Acknowledgments}
G. E. Bittencourt Moraes and G. de Loreno are supported by Coordenação de Aperfeiçoamento de Pessoal de Nível Superior (CAPES)/Brazil - Finance code 001.
\end{comment}

\end{document}